\newcommand{\R}{{{\mathbb R}}}
\newcommand{\N}{{{\mathbb N}}}
\newcommand{\Z}{{{\mathbb Z}}}
\newcommand{\T}{{{\mathbb T}}}
\newtheorem{lema}{Lemma}[section]
\newtheorem{teor}{Theorem}[section]
\newtheorem{definicion}{Definition}[section]
\newtheorem{nota}{Remark}[section]
\def\qed{\hbox to 0pt{}\hfill$\rlap{$\sqcap$}\sqcup$\medbreak}
\title{Positive Solution of  Singular BVPs for System of Dynamic Equations on Time Scales}
\author{{\sc Ariadna Lago}, {\sc Victoria
Otero--Espinar\footnote{Corresponding author,
E-mail: mvictoria.otero@usc.es}} and {\sc Tania Pernas} \\
{Departamento de
An\'alise Matem\'atica,}\\ {Facultade de Matem\'aticas,}\\
{Universidade de Santiago de Compostela,}\\{Galicia,} $\,$ {\sc
{\sc Spain}}}
\begin{document}
\date{}
\maketitle

\begin{abstract}
This paper is devoted  to derive some necessary and sufficient
conditions for the existence of  positive solutions to a singular second order system of dynamic equations with Dirichlet boundary conditions. The results are obtained by employing the fixed-point theorems and the method of the lower and upper solutions.
\end{abstract}

{\bf Keywords:} Time Scales,  Boundary Value Problem in Dynamic Equations, Upper and Lower solutions.

{\bf AMS Classification:} 34B16, 34K10, 39A13.

\section{Introduction}

The main purpose of this paper is to establish existence results for the second order Dirichlet system 
$$
(P)\left\{\begin{array}{l}
x_i^{\Delta\Delta}=f_i(t,x^\sigma(t)),\; t\in (J^\kappa)^o,\\
\noalign{\bigskip}x_i(a)=A_i, \, x_i(\sigma^2(b))=B_i,\; i=1,2,\dots,n.
\end{array}\right.
$$
with $f=(f_1,\dots,f_n)$, $i=1,2,\dots,n$, where $f_i:(J^\kappa)^o\times\mathcal{A}\rightarrow \R$,  $\mathcal{A} \subset \R^n$ and $J$ is a time scale interval. The nonlinearity $ f_i (t, x)$ may be singular at $ x_i$, $ i = 1, \dots, n$ and/or $ t $.

Stefan Hilger \cite{h} introduced the notion of time scale in 1988
in order to unify the theory of continuous and discrete calculus. The time scales approach not only unifies differential and difference equations, but also solves some other problems powerfully, such as a mix of stop-start and continuous behaviors \cite{js}, \cite{abl}. Nowadays the theory on time scales has been widely applied to several scientific fields such as biology, heat transfer, stock market, wound healing and epidemic models. 

Under the general form of problem $(P)$ it included the Emden-Fowler equation which arises, for example, in astrophysics, related to the stellar structure (gaseous dynamics). In this case the fundamental problem is to investigate the equilibrium configuration of the mass of spherical clouds of gas. Also arises in gas dynamics and fluid mechanics. The solutions of physical interest in this context are bounded non oscillatory and possess a positive zero.
In the relativistic mechanics and nuclear physics. And in chemically reacting systems: in the theory of diffusion and reaction this equation appears as governing the concentration $u$ of a substance which disappears by an isothermal reaction at each point of a slab of catalyst.

We refer to Wong \cite{w}, for a general historical overview about this equation.

Many works on this system have been written in the continuous case, we can cite among others, \cite{aoll}, \cite{hz} or \cite{z} for $n=1$ or \cite{zho} for $n=2$. 

On the discrete case we find the book \cite{abm} which studies the oscillation properties of the solutions of different difference equations. For the specific problem $u^{\Delta\Delta}(t)+p(t)u^\gamma(\sigma(t))=0$, where $p\geq 0$ and $\gamma$ quotient of odd positive numbers, also oscillation properties were studied in \cite{ah}.

On time scales some results on existence and uniqueness of classical solutions or solutions in the sense of distribution for $n=1$ can be found in the articles \cite{vivi}, \cite{kno}, \cite{zlg} and \cite{aopv}. Considering classical solutions, oscillation properties have also been studied, in works such as \cite{hss} (with delay) or \cite{bep}.

In the present paper we present some results on time scales considering classical solutions which generalize the ones from the continuous case.
 
The remainder of the paper is organized as follows. In Sect. $2$, we state some existence results supposing the existence of a pair of lower and upper solutions and employing the Schauder fixed point theorem. In Sect. $3$, we shall give a necessary and sufficient condition for the existence of positive solutions of the singular boundary value problem $(P)$  by constructing a lower solution.

\section{Lower and Upper Solutions Method}

Let $\T$ be an arbitrary time scale. We assume that $\mathrm{\T}$ has the topology that it inherits from the standard topology on $\R$. 
See \cite{bp} for general theory on time scales.

Let $a, b \in \T$, such that $a<\rho( b)$. If a is a right-dense point, we consider $J=(a,\sigma^2(b)]_\T$, $J^{\kappa}=(a,\sigma(b)]_\T$ and $(J^\kappa)^o=(a,\sigma(b))_\T$. In the other case,
$J=[a,\sigma^2(b)]_\T$, $J^{\kappa}=[a,\sigma(b)]_\T$ and $(J^\kappa)^o=[a,\sigma(b))_\T$.

The problem we will consider in this section is
$$
(P)\left\{\begin{array}{l}
x_i^{\Delta\Delta}=f_i(t,x^\sigma(t)),\; t\in (J^\kappa)^o,\\
\noalign{\bigskip}x_i(a)=A_i, \, x_i(\sigma^2(b))=B_i,\; i=1,2,\dots,n.
\end{array}\right.
$$
with $f=(f_1,\dots,f_n)$, $i=1,2,\dots,n$, where $f_i:(J^\kappa)^o\times\mathcal{A}\rightarrow \R$,  $\mathcal{A} \subset \R^n$.

We say that f verifies the hypothesis ${\rm (H_1)}$ if for every $i=1,2,\dots,n$ are satisfied the following conditions:
\begin{itemize}
\item [i)] For every $x\in \mathcal{A}$, $f_i(\cdot,x)\in C_{rd}((J^\kappa)^o)$,
\item [ii)] $f_i(t,\cdot)$ is continuous on $ \mathcal{A}$ uniformly in $t\in(J^\kappa)^o$.
\end{itemize}

For convenience, we denote
$$E\ =\ \left\{g\in C_{rd}((J^\kappa)^o,\R^+): \; \int_a^{\sigma(b)}(\sigma(s)-a)(\sigma^2(b)-s)g(s)\Delta s<+\infty\right\}.$$
We say that $f$ satisfies the condition ${\rm (H_2)}$ on
 $ \mathcal{B} \subset (J^\kappa)^o\times\mathcal{A}$
if for $i=1,2,\dots,n$ there exists a function $h_i\in E$ such that:
\begin{itemize}
\item [${\rm (H_2)}$] \hspace{4cm} $ |f_i(t,x)|\leq h_i(t),\hspace{0.25cm}\forall (t,x)\in \mathcal{B}$.\\
\end{itemize}
\begin{definicion}
A solution of $(P)$ is a function $x=(x_1,\dots,x_n)$, with $x_i\in
C^{2}_{rd}((a,b)_{\mathrm{\T}})$ for all $i=1,\dots,n$ such that  $x(t) \in \mathcal{A}$, for all $t \in [a,\sigma^{2}(b)]_{\mathrm{\T}} $, which satisfies $(P)$ for each $t\in (J^\kappa)^o$ and $i=1,\dots,n$, where
$$C^{2}_{rd}((a,b)_{\mathrm{\T}})=\left\{y\in C([a,\sigma^{2}(b)]_{\mathrm{\T}}), \ y^{\Delta\Delta}:(J^\kappa)^o \rightarrow \R, \mbox{ and }y^{\Delta\Delta}\in C_{rd}((a,b)_{\mathrm{\T}})\right\}.$$
\end{definicion}

\begin{definicion}
We say that $\alpha=(\alpha_1,\dots,\alpha_n)$, with $\alpha_i \in C^{2}_{rd}((a,b)_{\mathrm{\T}})$, is a lower solution of $(P)$ if $\alpha(t) \in \mathcal{A}$, for all $t \in [a,\sigma^{2}(b)]_{\mathrm{\T}} $ and 
$$
\begin{array}{lcl}
-\alpha^{\Delta\Delta}(t) \leq f(t,\alpha^{\sigma}(t)), ~ &t \in (J^\kappa)^o, \\
\\
\alpha(a)\leq A,\,  \alpha(\sigma^{2}(b))\leq B.
\end{array}
$$
\end{definicion}
An upper solution  $\beta=(\beta_1,\dots,\beta_n)$ of $(P)$ is defined similarly  by reversing the previous inequalities.

We have the following result

\begin{teor}\label{1.1}
Let  $\alpha$ and $\beta$ be, respectively, a lower and upper solution for problem $(P)$, such that $\alpha\leq\beta$ on  $[a,\sigma^2(b)]_\T$. If  $f$   satisfies     ${\rm (H_1)}$  and  the  conditions ${\rm (H_2)}$  on
 $$ \mathcal{D}_\alpha^\beta=\left\{(t,x)\in(J^\kappa)^o\times\R : \alpha^\sigma(t)\leq x\leq\beta^\sigma(t)\right\},$$
   and
 \begin{itemize}
 \item [${\rm (H_3)}$]    For $t \in [a,b]_T$ and $ x \in \mathcal{A}: \alpha(t)\leq x\leq\beta(t) \Rightarrow $ $$f_i(t,\alpha(t))\leq f_i(t,x)\leq f_i(t,\beta(t)),$$  for all $i=1,\dots,n$.
 \end{itemize}
   Then  problem $(P)$ has at least one solution $x$ such that $\alpha\leq x\leq\beta$ on $[a,\sigma^2(b)]_\T$.
\label{th_cl_1}\end{teor}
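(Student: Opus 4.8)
\section*{Proof proposal}

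The plan is to reduce $(P)$, via a truncation, to a regular fixed-point problem to which Schauder's theorem applies, and then to recover the order bounds $\alpha\le x\le\beta$ by a comparison argument built on ${\rm (H_3)}$ together with the defining inequalities of the lower and upper solutions. First I would modify the nonlinearity: for each $i$ let $p(t,x)$ be the coordinatewise projection that replaces $x_i$ by $\beta_i^\sigma(t)$ when $x_i>\beta_i^\sigma(t)$, by $\alpha_i^\sigma(t)$ when $x_i<\alpha_i^\sigma(t)$, and leaves it unchanged otherwise, and set $\tilde f_i(t,x)=f_i(t,p(t,x))$. Since $(t,p(t,x))$ always lies in $\mathcal D_\alpha^\beta$, hypothesis ${\rm (H_1)}$ makes $\tilde f_i$ continuous in $x$ and $rd$-continuous in $t$, while ${\rm (H_2)}$ on $\mathcal D_\alpha^\beta$ yields the uniform bound $|\tilde f_i(t,x)|\le h_i(t)$ with $h_i\in E$. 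This simultaneously removes the singularity in the $x$ variable and confines the arguments of $f$ to the region where ${\rm (H_3)}$ is available.

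Next I would rewrite the modified Dirichlet problem $x_i^{\Delta\Delta}=\tilde f_i(t,x^\sigma)$, $x_i(a)=A_i$, $x_i(\sigma^2(b))=B_i$ as a fixed-point equation $x=Tx$ on $X=C([a,\sigma^2(b)]_\T,\R^n)$ with the supremum norm. Writing $\ell_i$ for the affine function with $\ell_i(a)=A_i$, $\ell_i(\sigma^2(b))=B_i$ and $G(t,s)\ge 0$ for the Green's function of the homogeneous Dirichlet problem for $-x^{\Delta\Delta}$, one has $T_ix(t)=\ell_i(t)+\int_a^{\sigma(b)}G(t,s)\,\tilde f_i(s,x^\sigma(s))\,\Delta s$, the sign being fixed so that $Tx$ is the unique solution of the modified linear problem. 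The estimate $0\le G(t,s)\le(\sigma(s)-a)(\sigma^2(b)-s)/(\sigma^2(b)-a)$ together with $h_i\in E$ guarantees that the integral converges despite a possible singularity of $f$ in $t$, that $TX$ is uniformly bounded, and, by isolating small neighbourhoods of the two endpoints and using $E$-integrability there, that $TX$ is equicontinuous. Hence $T$ is completely continuous by the Arzel\`a--Ascoli theorem on time scales, and the dominating functions $h_i$ give the continuity of $T$ through a dominated-convergence argument. Schauder's fixed-point theorem then produces a solution $x$ of the modified problem on a suitably large closed ball that $T$ maps into itself.

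The decisive step is to prove $\alpha\le x\le\beta$ on $[a,\sigma^2(b)]_\T$, for then $p(t,x^\sigma)=x^\sigma$, the truncation is inert, $\tilde f=f$, and $x$ solves $(P)$. I would argue the upper bound by examining $w_i=x_i-\beta_i$; since $\beta$ is an upper solution, $w_i(a)\le 0$ and $w_i(\sigma^2(b))\le 0$, so a positive value of $w_i$ would force an interior maximum. Wherever $x_i^\sigma>\beta_i^\sigma$ the truncation returns $\beta_i^\sigma$ in the $i$-th slot while all coordinates of $p(t,x^\sigma)$ lie at or below $\beta^\sigma(t)$, so the upper inequality in ${\rm (H_3)}$ gives $\tilde f_i(t,x^\sigma)=f_i(t,p(t,x^\sigma))\le f_i(t,\beta^\sigma(t))$; combining this with the upper-solution inequality $-\beta_i^{\Delta\Delta}\ge f_i(t,\beta^\sigma)$ yields $w_i^{\Delta\Delta}\ge 0$ on the set where $w_i^\sigma>0$. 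A function that is convex there and nonpositive at both endpoints cannot attain a positive interior maximum, so $w_i\le 0$. The lower bound $\alpha\le x$ is entirely symmetric, using the lower inequality in ${\rm (H_3)}$ and $-\alpha^{\Delta\Delta}\le f_i(t,\alpha^\sigma)$.

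I expect this final comparison to be the main obstacle. On a time scale the sign of $x^{\Delta\Delta}$ at an extremum must be read through the forward jumps $\sigma$ and $\sigma^2$ rather than through an ordinary second-derivative test, so the maximum principle invoked above has to be justified carefully at right-scattered and right-dense points alike; and the quasimonotone structure encoded in ${\rm (H_3)}$ is exactly what permits the coupled system to be treated one component at a time, since it lets me bound $\tilde f_i$ by $f_i(t,\beta^\sigma)$ and $f_i(t,\alpha^\sigma)$ without controlling the off-diagonal coupling directly. The verification of complete continuity near the singular endpoints is the other, more routine, technical point.
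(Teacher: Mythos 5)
Your architecture coincides with the paper's: truncate the nonlinearity onto the band $[\alpha^\sigma,\beta^\sigma]$, solve the modified Dirichlet problem by Schauder's theorem through the Green's function representation (with $h\in E$ giving boundedness and equicontinuity near the singular endpoints, exactly as in the paper's estimate of $(Nu)^\Delta$), and then show a posteriori that every solution of the modified problem lies between $\alpha$ and $\beta$. The one genuine divergence is the modification itself, and it changes what you must prove in the comparison step. The paper does not use the bare truncation $f_i(t,d(t,x))$ but adds the bounded penalization term $\frac{d_i(t,x)-x_i}{1+|d_i(t,x)-x_i|}$; at an interior maximum $t_0$ of $v_i=\alpha_i-u_i$ one has $v_i^{\Delta\Delta}(\rho(t_0))\le 0$, the $f$-terms cancel in the right direction by ${\rm (H_3)}$, and the penalization contributes the strictly positive quantity $\frac{v_i(t_0)}{1+v_i(t_0)}$ to $v_i^{\Delta\Delta}(\rho(t_0))$, an immediate pointwise contradiction. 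With your bare truncation the same computation yields only $w_i^{\Delta\Delta}(\rho(t_0))\ge 0$ against $w_i^{\Delta\Delta}(\rho(t_0))\le 0$, which is no contradiction, so the second-derivative test at the maximum cannot close the argument; you are committed to the chord/convexity route you sketch, namely taking a maximal interval $(c,d)_\T$ on which $w_i>0$, checking that $w_i^{\Delta\Delta}(s)\ge 0$ for the relevant $s$ because $\sigma(s)$ stays in $(c,d)_\T$ there, deducing that $w_i^\Delta$ is nondecreasing, and playing this off against $w_i(c)\le 0$ and $w_i(d)\le 0$ via the time-scale mean value theorem. That argument can be completed, so your proof is viable, but it is exactly the delicate point you flag, it must be written out with care at right-scattered and left-dense points, and the penalization term is precisely the device by which the paper avoids having to do so. Everything else --- the sign convention $x^{\Delta\Delta}=-f^{*}$, the verification of ${\rm (H_1)}$ and ${\rm (H_2)}$ for the truncated nonlinearity, and the use of ${\rm (H_3)}$ to dominate the truncated $f_i$ by $f_i(t,\beta^\sigma)$ from above and $f_i(t,\alpha^\sigma)$ from below --- matches the paper.
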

\begin{proof}
We consider the following modified problem
$$
(P_m)\left\{\begin{array}{lcl}
x_i^{\Delta\Delta}(t) = - f_i^*(t,x^{\sigma}(t)), ~ &t \in (J^\kappa)^o,
\\ \noalign{\bigskip} x_i(a)=A_i,\,  x_i(\sigma^{2}(b))=B_i, \; i=1,2,\dots,n.
\end{array}
\right.
$$
with
$$
f_i^*(t,x) = f_i(t,d_i(t,x))+\frac{d_i(t,x)-x_i}{1+|d_i(t,x)-x_i|},
$$
where $d=(d_1,\dots, d_n)$ and $d_i:[a,\sigma^2(b)]_\T\times\mathcal{A}\rightarrow \R$ is defined
$$
d_i(t,x) = \left\{
\begin{array}{lcl}
\alpha_i^{\sigma}(t), &\mbox{if} & x_i < \alpha_i^{\sigma}(t), \\
\\
  x_i, &\mbox {if}& \alpha_i^{\sigma}(t) \leq x_i \leq \beta_i^{\sigma}(t),\\
  \\
\beta_i^{\sigma}(t),  &\mbox{if}& \;  \beta_i^{\sigma}(t) \leq x_i.
\end{array}
\right.
$$
We can prove that $d_i(t,\cdot)$ is continuous on $\mathcal{A}$ uniformly in $t$ and $d_i(\cdot,x)\in C_{rd}([a,\sigma^2(b)]_\T$, for every $x\in \mathcal{A}$. Hence the function $p: \mathcal{D}_\alpha^\beta \rightarrow \mathcal{D}_\alpha^\beta$, $p(t,x)= (t,d(t,x))$ verified for each $x\in \mathcal{A}$, $p(\cdot,x)\in C_{rd}((J^\kappa)^o)$.

Due to the hypothesis it is easy to see that  ${\rm (H_1)}$ is satisfied and that there exist $h_i^*\in E$ such that ${\rm (H_2)}$ holds for the function $f^*$.

Note that, if $u$ is a solution of $(P_m)$ such that $\alpha\leq u\leq\beta$ on $[a,\sigma^2(b)]_\T$ then $u$ is a solution of $(P)$.

To show that any solution $u$ of $(P_m)$, is between $\alpha$ and $\beta$, suppose that there exists $i=1,\dots,n$ and $t^*\in[a,\sigma^2(b)]_\T$ such that $v_i(t^*)=\alpha_i(t)-u_i(t)>0$. As $v_i(a)\leq 0$ and $v_i(\sigma^2(b))\leq 0$, then there exists $t_0\in (a,\sigma^2(b))_\T$ with
$$
v_i(t_0)=\max\{v_i(t),\ t\in[a,\sigma^2(b)]_\T\}>0,
$$
and $v_i(t)<v_i(t_0)$ for $t \in (t_0,\sigma^2(b)]_\T$. The point
$t_0$ is not simultaneously left-dense and right-scattered (see theorem 2.1 in \cite{kno}) (this implies that $(\sigma\circ\rho)(t_0)=t_0$) and we have that $v_i^{\Delta\Delta}(\rho(t_0))\leq 0$ (see \cite{kno}), so given that $\alpha^\sigma(t)\leq d(t,u^\sigma(t))$
$$
\begin{array}{lcl}
-v_i^{\Delta\Delta}(\rho(t_0))&=& u_i^{\Delta\Delta}(\rho(t_0))-\alpha_i^{\Delta\Delta}(\rho(t_0))\leq -f_i(\rho(t_0),d(\rho(t_0),u^\sigma(\rho(t_0)))\\
\\
&-&\displaystyle \frac{d_i(\rho(t_0),u^\sigma(\rho(t_0)))-x_i^\sigma(\rho(t_0))}{1+|d_i(\rho(t_0),u^\sigma(\rho(t_0)))-x_i^\sigma(\rho(t_0))|}+f(\rho(t_0),\alpha^\sigma(\rho(t_0)))
\\
\\ &=& -\displaystyle\frac{\alpha(t_0) - u(t_0)}{1 + (\alpha(t_0)-u(t_0))}<0.\\
\end{array}
$$
So $v^{\Delta\Delta}(\rho(t_0))> 0$, that is a contradiction. And so we have proved that $v(t)\leq 0$, for each $ t\in[a,\sigma^2(b)]_\T$.

Analogously it can be proved that $u(t)\leq\beta(t)$, for all $t\in[a,\sigma^2(b)]_\T$.

We only need to prove that problem $(P_m)$ has at least one solution.

Consider now the operator $N:C([a,\sigma^2(b)]_\T)\rightarrow C([a,\sigma^2(b)]_\T)$, defined by
\begin{equation}\label{operadorN}
\displaystyle Nu(t)=\phi(t)+\displaystyle\int_a^{\sigma(b)}G(t,s)\ f^*(s,u^\sigma(s))\ \Delta s,
\end{equation}
for each $t \in [a,\sigma^2(b)]_\T$, where (see \cite{bp})
\begin{equation}
\label{green}
G(t,s)=\frac{1}{\sigma^2(b)-a}\left\{\begin{array}{lc}
(t-a)\ (\sigma^2(b)-\sigma(s)),&t\leq s,\\
\\
(\sigma(s)-a)\ (\sigma^2(b)-t),&\sigma(s)\leq t,
\end{array}\right.
\end{equation}
is the Green's function of the problem
$$
\left\{\begin{array}{l}
-x^{\Delta\Delta}=0,\\
\\
x(a)=x(\sigma^2(b))=0,
\end{array}\right.
$$
and, for $t\in [a,\sigma^2(b)]_\T$
$$\phi(t)=A+\displaystyle\frac{B-A}{\sigma^2(b)-a}\ (t-a),$$
 is the solution of $-x^{\Delta\Delta}=0$ such that $\phi(a)=A$ and $\phi(\sigma^2(b))=B$.

 Clearly, $G(t,s)>0$ on $(a,\sigma^2(b))_\T\times(a,\sigma^2(b))_\T$, $G(t,\cdot)$ is rd-continuous on $[a,\sigma(b)]_\T$ and $G(\cdot,s)$ is continuous on $[a,\sigma^2(b)]_\T$.

 The function $Nu$ defined by $(\ref{operadorN})$ belongs to $C([a,\sigma^2(b)]_\T)$
 because $f^*$ satisfies the conditions ${\rm (H_1)}$ and ${\rm (H_2)}$ on $(J^\kappa)^o \times \R $ and
 $G(t,s) \le s(1-s)$, for each $t, s \in [a,\sigma^2(b)]_\T$ .

It is obvious that $u\in C([a,\sigma^2(b)]_\T)$ is a solution of $(P_m)$ if and only if $u=Nu$. So the problem now is to ensure the existence of fixed points of $N$.

First of all, $N$ is well defined, is continuous and $N(C([a,\sigma^2(b)]_\T))$ is a bounded set. The existence of a fixed point of $N$ follows from the Schauder fixed point theorem, once we have checked that $N(C([a,\sigma^2(b)]_\T))$ is relatively compact, that using the Ascoli-Arzela theorem, is equivalent to proving that $N(C([a,\sigma^2(b)]_\T))$ is an equicontinuous family.

Let $h^*\in E$ be the function related to $f^*$ by condition ${\rm (H_2)}$. We compute the first derivative of $Nu$ using Theorem 1.117 of \cite{bp}\\
$$\begin{array}{lll}
|(Nu)^\Delta(t)|&=& \displaystyle\frac{1}{\sigma^2(b)-a}\ \bigg| \ (B-A) - \displaystyle\int_a^t(\sigma(s)-a)\ f^*(s,u^\sigma(s))\ \Delta s \\
\noalign{\bigskip}& &+\displaystyle\int_t^{\sigma(b)} (\sigma^2(b)-\sigma(s))\ f^*(s,u^\sigma(s))\ \Delta s \ \bigg|\leq\\
\noalign{\bigskip}&\leq&\displaystyle\frac{1}{\sigma^2(b)-a}\ \bigg(\ |B-A|+\displaystyle\int_a^t(\sigma(s)-a) \ h^*(s)\ \Delta s  \\
\noalign{\bigskip}& &  +\displaystyle\int_t^{\sigma(b)} (\sigma^2(b)-\sigma(s))\ h^*(s)\ \Delta s\ \bigg)\\
\noalign{\bigskip}&:=&\displaystyle\frac{1}{\sigma^2(b)-a}\bigg(\ |B-A|+\lambda(t)\ \bigg).\bigskip
\end{array}$$

Finally it is enough to check that $\lambda\in L^1((J^\kappa)^o)$, using integration by parts we obtain
$$\begin{array}{lll}
\displaystyle\int_a^{\sigma(b)}|\lambda(s)|\Delta s&=& \displaystyle\int_a^{\sigma(b)}\lambda(s)\Delta s =\lim_{r\to\sigma(b)^-}\displaystyle\int_a^r \bigg(\int_a^t\big(\sigma(s)-a\big)\ h^*(s)\ \Delta s \bigg)\Delta t\\
\noalign{\bigskip}& &+\displaystyle \lim_{r\to a^+}\displaystyle\int_r^{\sigma(b)}\bigg( \int_t^{\sigma(b)} \big(\sigma^2(b)-\sigma(s)\big)\ h^*(s)\ \Delta s \bigg)\Delta t\\
\noalign{\bigskip}&=&  2\ \displaystyle\int_a^{\sigma(b)}\big(\sigma(s)-a\big)\ \big(\sigma^2(b)-\sigma(s)\big)\ h^*(s)\ \Delta s\\
\noalign{\bigskip}& &-\displaystyle \lim_{r\to\sigma(b)^-}\big(\sigma^2(b)-r\big)\int_a^r\big(\sigma(s)-a\big)\ h^*(s)\ \Delta s \\
\noalign{\bigskip}& & -\displaystyle \lim_{r\to a^+}\big(r-a\big)\int_r^{\sigma(b)}  \big(\sigma^2(b)-\sigma(s)\big)\ h^*(s)\ \Delta s<+\infty,\\
\end{array}$$
due to $h^*\in E$, and the fact
$$\big(\sigma^2(b)-r\big)\int_a^r\big(\sigma(s)-a\big)\ h^*(s)\ \Delta s \le \displaystyle\int_a^{\sigma(b)}\big(\sigma(s)-a\big)\ \big(\sigma^2(b)-\sigma(s)\big)\ h^*(s)\ \Delta s,$$
$$\big(r-a\big)\int_r^{\sigma(b)}  \big(\sigma^2(b)-\sigma(s)\big)\ h^*(s)\ \Delta s \le \displaystyle\int_a^{\sigma(b)}\big(\sigma(s)-a\big)\ \big(\sigma^2(b)-\sigma(s)\big)\ h^*(s)\ \Delta s.$$
 And so the result is proved.
\qed   \end{proof}

\begin{nota}
The above theorem is true also if we change $(H_3)$ by:
\begin{itemize}
\item [${\rm (\overline{H}_3)}$]    For $t \in [a,b]_T$ and $ x, y \in \mathcal{A}$, there exists $M>0$ such that  $$f_i(t,x)- f_i(t,y)\leq M(x-y).$$
\end{itemize}
\end{nota}

\begin{nota}
The existence of lower solution and upper solution with $0<\alpha\le\beta$ can be obtained through conditions of $f_{i}$. For instance, if $f_{i}\in C_{rd}(J)$ and $f_{i}$ is bounded the existence holds.
\end{nota}

\section{ Existence of positive solution}

Consider the problem
$$
(P_0)\left\{\begin{array}{lcl}
-x_i^{\Delta \Delta}(t)= f_i(t,x^\sigma(t)),& & t\in {(J^\kappa)}^o,\\
\noalign{\bigskip} x_i(a)= x_i(\sigma^{2}(b))=0, \; i=0,\dots, n.
\end{array}
\right.
$$
We will deduce the existence  of solution  to $(P_0)$  by supposing
that the following hypothesis hold
\begin{itemize}
\item [${\rm (\widetilde{H}_1)}$] For every $i=1,\dots, n$,
$f_i:(J^\kappa)^o\times\mathcal{A}\rightarrow [0,+\infty)$,  where
$\mathcal{A}\subset(0,+\infty)\times \dots^n \times(0,+\infty)$, verifies
\begin{itemize}
\item [i)] For every $x\in \mathcal{A}$, $f_i(\cdot,x)\in C_{rd}((J^\kappa)^o)$,
\item [ii)] $f_i(t,\cdot)$ is continuous on $ \mathcal{A}$ uniformly in $t\in(J^\kappa)^o$.
\end{itemize}
  \item [${\rm (\widetilde{H}_2)}$] For every $i=1,\dots, n$, and $j=1,\dots, n$ there exists constants $\lambda_{ij}, \mu_{ij}$, with $-\infty<\lambda_{ij}<\mu_{ij}<1$, $\lambda_{ii}<0<\mu_{ii}<1$, $\mu_{ij}<0$ if $i\not=j$, such that if $0<c\le 1$ then $$c^{\mu_{ij}} f_i(t,x_1,\dots,x_n)\le f_i(t,x_1,\dots,cx_j,\dots,x_n)\le c^{\lambda_{ij}}f_i(t,x_1,\dots,x_n),$$
      for each $t\in (J^\kappa)^o$ and $x\in \mathcal{A}$
\end{itemize}

\begin{nota} If $c\geq 1$ for every $i=1,\dots, n$
$$c^{\lambda_{ij}} f_i(t,x_1,\dots,x_n)\le f_i(t,x_1,\dots,cx_j,\dots,x_n)\le c^{\mu_{ij}}f_i(t,x_1,\dots,x_n),$$
      for each $t\in (J^\kappa)^o$ and $x\in \mathcal{A}$.

\end{nota}
We consider a solutions to the problem
\begin{definicion}
A positive solution of type $1$ of $(P_0)$ is a function $x=(x_1,\dots,x_n)$, with $x_i\in
C^{2}_{rd}((a,b)_{\mathrm{\T}})$ for all $i=1,\dots,n$ such that  $x(t) \in \mathcal{A}$ and $x_i(t)>0$, for all $t \in [a,\sigma^{2}(b)]_{\mathrm{\T}} $, which satisfies the equalities on $(P_0)$ for each $t\in (J^\kappa)^o$ and $i=1,\dots,n$, and exist and are finite the limits
$$\displaystyle\lim_{t \to a^+}x_i^{\Delta}(t) \quad \mbox{ and } \quad  \lim_{t \to \sigma(b)^-}x_i^{\Delta}(t).$$
\end{definicion}

\begin{definicion}
	We say that $\alpha\in C_{rd}^{2}$ is a \emph{lower solution} of $(P_0)$ if for each $i=1,2,\dots,n$ we have,
\begin{displaymath}
\left\{\begin{array}{lcl}
-\alpha_i^{\Delta \Delta}(t)\le f_i(t,x^\sigma(t)),& & t\in {(J^\kappa)}^o,\\
\noalign{\bigskip} \alpha_i(a)= \alpha_i(\sigma^{2}(b))=0.
\end{array}
\right.
\end{displaymath}

	Similary, $\beta\in C_{rd}^{2}$ is called an \emph{upper solution} of $(P_0)$ if for each $i=1,2,\dots,n$,
	\begin{displaymath}
\left\{\begin{array}{lcl}
-\beta_i^{\Delta \Delta}(t)\ge f_i(t,x^\sigma(t)),& & t\in {(J^\kappa)}^o,\\
\noalign{\bigskip} \beta_i(a)= \beta_i(\sigma^{2}(b))=0.
\end{array}
\right.
	\end{displaymath}
\end{definicion}

\begin{lema}
\label{le_I1_I2}
Suppose that ${\rm (\widetilde{H}_1)}$ and ${\rm (\widetilde{H}_2)}$ hold. If $x$ is a positive solution of type $1$ of $(P_0)$ then for each $i=1,\dots,n$ there are constants $I_{i1}, I_{i2}$, $0<I_{i1}< I_{i2}$ such that
$$I_{i1}e(t)\leq x_i(t) \leq e(t) I_{i2},$$
where $e(t)=\displaystyle \frac{(t-a)(\sigma^2(b)-t)}{\sigma^2(b)-a}$.
\end{lema}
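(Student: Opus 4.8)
The plan is to represent the solution through the Green's function $G$ of $(\ref{green})$ and then to sandwich $G(t,s)$ between two multiples of $e(t)$, after first extracting from the ``type $1$'' hypothesis exactly the integrability that the singular endpoints require. Throughout write $L:=\sigma^2(b)-a$. Since $x_i^{\Delta\Delta}=-f_i(\cdot,x^\sigma(\cdot))$ is rd-continuous on the open interval, for $a<r_1<r_2<\sigma(b)$ one has $\int_{r_1}^{r_2}f_i(s,x^\sigma(s))\,\Delta s=x_i^\Delta(r_1)-x_i^\Delta(r_2)$; letting $r_1\to a^+$, $r_2\to\sigma(b)^-$ and using that both one-sided limits of $x_i^\Delta$ exist and are finite (this is where type $1$ enters), I obtain
$$C_i:=\int_a^{\sigma(b)}f_i(s,x^\sigma(s))\,\Delta s=\lim_{t\to a^+}x_i^\Delta(t)-\lim_{t\to\sigma(b)^-}x_i^\Delta(t)<+\infty.$$
Moreover $C_i>0$, for if $C_i=0$ then $f_i(\cdot,x^\sigma(\cdot))\equiv 0$, whence $x_i^{\Delta\Delta}\equiv 0$, so $x_i$ is affine and, with $x_i(a)=x_i(\sigma^2(b))=0$, identically zero, contradicting positivity.

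Because $f_i(\cdot,x^\sigma(\cdot))$ is thus $\Delta$-integrable on $(a,\sigma(b))$, the function $y_i(t):=\int_a^{\sigma(b)}G(t,s)f_i(s,x^\sigma(s))\,\Delta s$ is finite for every $t$ and, by the defining property of $G$, solves $-y_i^{\Delta\Delta}=f_i(\cdot,x^\sigma(\cdot))$ with $y_i(a)=y_i(\sigma^2(b))=0$. Then $x_i-y_i$ solves the homogeneous Dirichlet problem $-z^{\Delta\Delta}=0$, $z(a)=z(\sigma^2(b))=0$, whose delta-antiderivative is affine and hence forces $z\equiv 0$; this yields the representation $x_i(t)=\int_a^{\sigma(b)}G(t,s)f_i(s,x^\sigma(s))\,\Delta s$.

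Next I would establish the elementary two-sided bound
$$\kappa(s)\,e(t)\ \le\ G(t,s)\ \le\ e(t),\qquad \kappa(s):=\frac{1}{L}\min\{\sigma(s)-a,\ \sigma^2(b)-\sigma(s)\},$$
valid for all $t\in[a,\sigma^2(b)]_\T$ and $s\in(J^\kappa)^o$, by a direct case analysis on $(\ref{green})$. For $t\le s$ one has $G(t,s)/e(t)=(\sigma^2(b)-\sigma(s))/(\sigma^2(b)-t)$, which is $\le 1$ because $\sigma(s)\ge s\ge t$, and $\ge(\sigma^2(b)-\sigma(s))/L$ because $\sigma^2(b)-t\le L$. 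For $\sigma(s)\le t$ one has $G(t,s)/e(t)=(\sigma(s)-a)/(t-a)$, which is $\le 1$ and $\ge(\sigma(s)-a)/L$ because $t-a\le L$. Taking the smaller lower bound gives $\kappa(s)$, and these two cases exhaust all admissible pairs since no point of $\T$ lies strictly between $s$ and $\sigma(s)$.

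Finally I would integrate these bounds against $f_i\ge 0$ in the representation. The upper estimate gives $x_i(t)\le e(t)\int_a^{\sigma(b)}f_i(s,x^\sigma(s))\,\Delta s=e(t)\,C_i$, so $I_{i2}:=C_i$ works. The lower estimate gives $x_i(t)\ge e(t)\,I_{i1}$ with $I_{i1}:=\int_a^{\sigma(b)}\kappa(s)f_i(s,x^\sigma(s))\,\Delta s$; here $I_{i1}>0$ since $\kappa>0$ on the interior and $f_i(\cdot,x^\sigma(\cdot))\not\equiv 0$, while $I_{i1}<I_{i2}$ follows from $\kappa(s)\le\frac{1}{2}$ (a minimum never exceeds the average, and $(\sigma(s)-a)+(\sigma^2(b)-\sigma(s))=L$), so that $I_{i1}\le\frac{1}{2}C_i<C_i=I_{i2}$. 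This yields $0<I_{i1}<I_{i2}$ and the claimed inequality. The one genuinely delicate point is the first step: the representation and the finiteness of $I_{i2}$ hold precisely because type $1$ forces $f_i(\cdot,x^\sigma(\cdot))$ to be integrable up to the possibly singular endpoints; without the finiteness of the one-sided limits of $x_i^\Delta$ the constant $I_{i2}$ could fail to exist. The Green's function estimates, by contrast, are routine once set up.
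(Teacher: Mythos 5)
Your proof is correct and follows essentially the same route as the paper: use the type~$1$ hypothesis to get $\int_a^{\sigma(b)}f_i(s,x^\sigma(s))\,\Delta s<\infty$, represent $x_i$ through the Green's function, and sandwich $G(t,s)$ between $w(s)e(t)$ and $e(t)$ (your weight $\kappa(s)=\frac{1}{L}\min\{\sigma(s)-a,\sigma^2(b)-\sigma(s)\}$ plays the role of the paper's $\frac{(s-a)(\sigma^2(b)-\sigma(s))}{(\sigma^2(b)-a)^2}$). Your explicit checks that $C_i>0$ (hence $I_{i1}>0$) and that $I_{i1}<I_{i2}$ via $\kappa\le\frac12$ are welcome details the paper leaves implicit.
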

\begin{proof}
	Integrate the equations of $(P_0)$ in ${(J^\kappa)}^o$, for $i=1, \dots,n$
$$-\displaystyle\int_a^{\sigma(b)} f_i(s,x^{\sigma}(s)) \, \Delta s = \int_a^{\sigma(b)} x_i^{\Delta \Delta }(s) \, \Delta s = \lim_{t \to \sigma(b)^-}x_i^{\Delta}(t) - \lim_{t \to a^+}x_i^{\Delta}(t) < +\infty .$$

	From (\ref{green}), we have
$$x_i(t) = \displaystyle\int_a^{\sigma(b)} G(t,s)\, f_i(s,x^{\sigma}(s)) \, \Delta s \leq \frac{(t-a)\ (\sigma^2(b)-t)}{\sigma^2(b)-a}\, \int_a^{\sigma(b)} f_i(s,x^{\sigma}(s)) \, \Delta s.$$

Since
$$
\begin{array}{lcl}
& & \displaystyle \int_a^{t} \frac{(s-a)\ (\sigma^2(b)-\sigma(s))(t-a)(\sigma^2(b)-t)}{(\sigma^2(b)-a)^2}\, f_i(s,x^{\sigma}(s)) \, \Delta s\\
\\
& \leq&  \displaystyle \int_a^t (\sigma(s)-a)(\sigma^2(b)-t)\, f_i(s,x^{\sigma}(s)) \, \Delta s,
\end{array}
$$
and
$$
\begin{array}{lcl}
& & \displaystyle\int_t^{\sigma(b)} \frac{(s-a)\ (\sigma^2(b)-\sigma(s))(t-a)(\sigma^2(b)-t)}{(\sigma^2(b)-a)^2}\, f_i(s,x^{\sigma}(s)) \, \Delta s\\
\\
& \leq&  \displaystyle\int_t^{\sigma(b)} (t-a)(\sigma^2(b)-\sigma(s))\, f_i(s,x^{\sigma}(s)) \, \Delta s,
\end{array}
$$
we have
$$\frac{(t-a)\ (\sigma^2(b)-t)}{(\sigma^2(b)-a)^2}\int_a^{\sigma(b)} \frac{(s-a)\ (\sigma^2(b)-\sigma(s))}{(\sigma^2(b)-a)}\, f_i(s,x^{\sigma}(s)) \, \Delta s \leq  x_i(t).$$

Thus, if we consider
$$I_{i1}= \frac{1}{(\sigma^2(b)-a)} \int_a^{\sigma(b)} \frac{(s-a)\ (\sigma^2(b)-\sigma(s))}{(\sigma^2(b)-a)}\, f_i(s,x^{\sigma}(s)) \, \Delta s,$$
$$I_{i2}=  \int_a^{\sigma(b)}  f_i(s,x^{\sigma}(s)) \, \Delta s.$$

It verifies
$$I_{i1}e(t)\leq x_i(t) \leq e(t) I_{i2}.$$
\qed   \end{proof}

\begin{lema}
\label{exis on alpha beta}
 If $\alpha$ and $\beta$ are lower and upper solutions of de problem $(P_0)$ such that $0<\alpha(t)\le \beta(t)$ for $t\in (J^\kappa)^o$, and ${\rm (\widetilde{H}_1)}$ and $(H_3)$ or $(\overline{H_{3}})$ hold. Then problem $(P_0)$ has a solution $x$ such that
$$\alpha\le x \le \beta.$$
If in addition there exists a function $h(t)=(h_1(t),\dots,h_n(t))$  with $h_i \in L^1((J^\kappa)^o))$ such that $$ |f_i(t,x)|\leq h_i(t),\hspace{0.25cm}\forall \ t \in (J^\kappa)^o \mbox{ and } \alpha_i(t) \le x_i \le \beta_i(t), \; i=1,\dots,n,$$
then solution $x$ is a positive solution of type $1$.
\end{lema}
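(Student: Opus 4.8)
The plan is to deduce the first assertion from Theorem~\ref{th_cl_1}, applied to $(P_0)$ viewed as the homogeneous-data case $A_i=B_i=0$ of $(P)$ (recall the Green function $(\ref{green})$ is the one of $-x^{\Delta\Delta}=0$). All I have to do is verify the hypotheses of that theorem on the order interval $\mathcal{D}_\alpha^\beta$. Hypothesis $(\widetilde{H}_1)$ already contains $(H_1)$, and since $0<\alpha(t)\le\beta(t)$ on $(J^\kappa)^o$ and $\mathcal{A}\subset(0,+\infty)^n$, every pair $(t,x)$ with $\alpha^\sigma(t)\le x\le\beta^\sigma(t)$ and $t$ interior lies in $(J^\kappa)^o\times\mathcal{A}$, so $f$ is finite and continuous there and $(H_3)$ (or, by the remark following Theorem~\ref{th_cl_1}, $(\overline{H}_3)$) is available. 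Thus the only genuinely new point is to produce the dominating functions required by $(H_2)$ on $\mathcal{D}_\alpha^\beta$.

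For $(H_2)$ I would use the comparison structure. Under $(H_3)$ one has directly $0\le f_i(t,x)\le f_i(t,\beta^\sigma(t))$ whenever $\alpha^\sigma(t)\le x\le\beta^\sigma(t)$, and the same bound follows from $(\overline{H}_3)$ by taking $y=\beta^\sigma(t)$ and using $x\le\beta^\sigma(t)$, $M>0$. Hence the natural candidate is $h_i(t):=f_i(t,\beta^\sigma(t))$, and because $\beta$ is an upper solution of $(P_0)$ this candidate is governed by the second difference of $\beta$,
$$0\le h_i(t)=f_i(t,\beta^\sigma(t))\le -\beta_i^{\Delta\Delta}(t),\qquad t\in(J^\kappa)^o.$$
So everything reduces to showing $h_i\in E$, i.e. that $\int_a^{\sigma(b)}(\sigma(s)-a)(\sigma^2(b)-s)\,(-\beta_i^{\Delta\Delta}(s))\,\Delta s<+\infty$.

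This integrability is where I expect the main difficulty, since $\beta_i^{\Delta\Delta}$ need not be integrable up to the singular endpoints. Conceptually, fixing an interior point $t_0$, the representation $\beta_i(t_0)=\int_a^{\sigma(b)}G(t_0,s)\,(-\beta_i^{\Delta\Delta}(s))\,\Delta s$ is finite: one solves $-\beta_i^{\Delta\Delta}=g_i$ on compact subintervals $[a',b']\subset(a,\sigma^2(b))_\T$, where $g_i:=-\beta_i^{\Delta\Delta}\ge 0$ is rd-continuous, and lets $a'\to a^+$, $b'\to\sigma^2(b)^-$, the harmonic part vanishing because $\beta_i(a)=\beta_i(\sigma^2(b))=0$ and the integral converging by monotone convergence. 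It then suffices to dominate the $E$-weight by the Green kernel, $(\sigma(s)-a)(\sigma^2(b)-s)\le C(t_0)\,G(t_0,s)$ on $(a,\sigma(b))_\T$, whence $\int(\sigma(s)-a)(\sigma^2(b)-s)(-\beta_i^{\Delta\Delta})\,\Delta s\le C(t_0)\,\beta_i(t_0)<+\infty$. Near $s=a$ and at right-dense points this bound is immediate; the delicate part, characteristic of the time-scale setting, is the mismatch between the factor $\sigma^2(b)-s$ in the definition of $E$ and the factor $\sigma^2(b)-\sigma(s)$ occurring in $G$, which I would absorb via $\sigma^2(b)-s=(\sigma^2(b)-\sigma(s))+\mu(s)$ together with a graininess estimate near the right endpoint. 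With $(H_1)$, $(H_2)$ and $(H_3)$ now in force, Theorem~\ref{th_cl_1} yields a solution $x$ with $\alpha\le x\le\beta$ on $[a,\sigma^2(b)]_\T$, so $x_i>0$ on the interior.

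For the final assertion I would use the explicit first-derivative formula obtained inside the proof of Theorem~\ref{th_cl_1}. Since $\alpha\le x\le\beta$ no truncation is active, so $f^*=f$, and with $A=B=0$,
$$x_i^\Delta(t)=\frac{1}{\sigma^2(b)-a}\Big(-\int_a^t(\sigma(s)-a)\,f_i(s,x^\sigma(s))\,\Delta s+\int_t^{\sigma(b)}(\sigma^2(b)-\sigma(s))\,f_i(s,x^\sigma(s))\,\Delta s\Big).$$
Under the extra hypothesis $|f_i(t,x)|\le h_i(t)$ with $h_i\in L^1((J^\kappa)^o)$, both integrands are dominated by the integrable function $h_i$, so each integral converges as $t\to a^+$ and as $t\to\sigma(b)^-$; hence $\lim_{t\to a^+}x_i^\Delta(t)$ and $\lim_{t\to\sigma(b)^-}x_i^\Delta(t)$ exist and are finite. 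Together with $x_i>0$ on $(a,\sigma^2(b))_\T$, this is exactly the definition of a positive solution of type $1$, completing the proof.
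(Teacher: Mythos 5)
Your route is genuinely different from the paper's. You try to apply Theorem~\ref{th_cl_1} once, on the whole interval, by manufacturing the dominating function required by ${\rm (H_2)}$ out of the upper solution; the paper instead exhausts $(J^\kappa)^o$ by compact subintervals $[a_k,b_k]_\T$ (on which ${\rm (H_2)}$ is automatic, since the relevant set of values of $x$ is a compact subset of $\mathcal{A}$ and $f$ is bounded there), solves the regularized problems $(P_k)$ by Theorem~\ref{th_cl_1}, and passes to the limit through bounds on $x_{ki}^{\Delta}(t_k)$, extraction of a convergent subsequence and an adaptation of a continuation theorem of Hartman. The paper's first conclusion therefore requires no global integrability of $f$ along $\beta$; you are trying to prove such an integrability as an intermediate step, and that is where your argument has genuine gaps.

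First, Theorem~\ref{th_cl_1} is not applicable as stated: its definitions require $\alpha(t)$, $\beta(t)$ and the solution $x(t)$ to lie in $\mathcal{A}$ for \emph{all} $t\in[a,\sigma^{2}(b)]_\T$, while here $\mathcal{A}\subset(0,+\infty)^{n}$ and $\alpha(a)=\beta(a)=\alpha(\sigma^{2}(b))=\beta(\sigma^{2}(b))=0$. This is not a formality; it is exactly the singularity in $x$ that the lemma is meant to handle, so at best you would have to rerun the proof of Theorem~\ref{th_cl_1} rather than cite its statement. Second, and more seriously, your verification of ${\rm (H_2)}$ is incomplete. It rests on (i) the representation $\beta_i(t_0)=\int_a^{\sigma(b)}G(t_0,s)\,(-\beta_i^{\Delta\Delta}(s))\,\Delta s$, which you assert via a limiting argument but do not prove (the harmonic interpolants and the monotone convergence of the truncated Green kernels both need justification), and (ii) the pointwise bound $(\sigma(s)-a)(\sigma^{2}(b)-s)\le C(t_0)\,G(t_0,s)$. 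For $s\ge t_0$ the latter amounts to boundedness of $(\sigma^{2}(b)-s)/(\sigma^{2}(b)-\sigma(s))$, and this can genuinely fail: if $\sigma(b)$ is simultaneously left-dense and right-dense, one can choose the graininess so that $\mu(s_k)/(\sigma(b)-\sigma(s_k))\to\infty$ along a sequence $s_k\to\sigma(b)^{-}$. You flag this mismatch yourself and propose to ``absorb it via a graininess estimate'', but no such estimate is supplied, and without it $h_i\in E$ --- hence ${\rm (H_2)}$, hence the existence part --- is not established. (Your final step, deducing finiteness of $\lim_{t\to a^{+}}x_i^{\Delta}(t)$ and $\lim_{t\to\sigma(b)^{-}}x_i^{\Delta}(t)$ from $|f_i|\le h_i\in L^{1}$, is correct and coincides with the paper's argument for the second assertion.)
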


\begin{proof}
Let's consider ${\{a_k\}}_{k\ge
1}, \, {\{b_k\}}_{k\ge 1}\subset (J^\kappa)^o$ two sequences such that
${\{a_k\}}_{k\ge 1}\subset {(a,(a+\sigma(b))/2)}_\T$ is strictly
decreasing to $a$ if $a=\sigma (a)$, and $a_k=a$ for all $k\ge 1$ if
$a<\sigma (a)$, and ${\{b_k\}}_{k\ge 1}\subset {((a+\sigma(b))/2,\sigma(b))}_\T$ is
strictly increasing to $\sigma(b)$ if $\rho(\sigma(b))=\sigma(b)$, $b_k=\rho(\sigma(b))$ for all $k\ge
1$ if $\rho(\sigma(b))<\sigma(b)$.

We denote as $D_k:={[a_k, b_k]}_\T\subset(J^\kappa)^o$, $k\ge 1$, and let $\{r_{i1}^k\}$ $\{r_{i2}^k\}$ sequences so that
$$\alpha_i(a_k)\le r_{i1}^k \le \beta_i(a_k),$$
$$\alpha_i(b_k)\le r_{i2}^k \le \beta_i(k_k).$$
For each $x\in \mathcal{A}$ define $$f_{ki}^*(\cdot,x):D_k \rightarrow [0,\infty),$$
for all $k\in \N$, $k\ge 1$ and $i=1,\dots,n$, as
$$f_{ki}^*(t,x)= f_i(t,d(t,x))+ \frac{d_i(t,x)-x_i}{1+|d_i(t,x)-x_i|}.$$
Consider the problems
$$
(P_k)\left\{\begin{array}{lcl}
x_i^{\Delta \Delta}(t)= -f_{ki}^*(t,x^\sigma(t)),& & t\in D_k,\\
\noalign{\bigskip} x_i(a_k)= r_{i1}^k, \ x_i(\sigma^{2}(b_k))=r_{i2}^k, \; i=1,\dots, n.
\end{array}
\right.
$$
 Due to the hypothesis $f_i$, $i=1,\dots,n$, by theorem \ref{1.1} we can ensure that there exists solution $(x_{k1},\dots,x_{kn})$ with $x_{ki} \in C_{rd}[a_k,\sigma^2(b_k)]$ such that
 $$\alpha_i(t)\le x_{ki}(t)\le \beta_i(t),$$
 with $t \in [a_k,b_k]$.
	Since $[a_1,b_1]\subset[a_k,b_k]$ for $k\in \N$ there exists $t_k \in [a_1,b_1]$ such that
 $$|x_{ki}^\Delta (t_k)|\le \displaystyle \frac{|x_{ki}(b_1)-x_{ki}(a_1)|}{|b_1-a_1|} \le \frac{2(\beta_{i}(b_1)-\beta_{i}(a_1))}{|b_1-a_1|}.$$
 Thus, we can find a sequence $\{t_k\}$ which converges to $t_0 \in [a_k,b_k]$ for $k \in \N$, satisfying
 $$x_{ki} (t_k)\rightarrow x_{0i}\in [\alpha_i(t_0),\beta_i(t_0)],$$
and
 $$x_{ki}^\Delta (t_k)\rightarrow x_{0i}^\Delta,$$
 for $i=1,\dots,n$ when $k\to\infty$.

 We note that $x_{ki}$ is the solution of
 $$y_i^{\Delta \Delta}(t)= -f_{i}(t,y^\sigma(t)),$$
 with $y_i(t_k)=x_{ki}(t_k)$ and $y_i^\Delta(t_k)=x_{ki}^\Delta(t_k).$

	Hence, due to an adaptation of Theorem $3.2$ in \cite{od} and by existence theorems, we can find a solution of the problem:	
 $$x_i^{\Delta \Delta}(t)= -f_{i}(t,x^\sigma(t)), \; x_i(t_0)=x_{0i}, \; x_{i}^{\Delta }(t_0)=x_{0i}^\Delta,\; i=1,\dots,n.$$
 
 This solution is defined in a maximal interval $W$ and we can find at least one sequence $\{x_{k}(t)\}$ that converges uniformly to $x(t)$ in the compact subintervals of $W$.
 
On the other hand, $\bigcup_{k=1}^{\infty}[a_k,b_k]=(J^\kappa)^o$ and $\alpha_i(t)\le x_{ki}(t)\le \beta_i(t)$ for $t \in [a_k,b_k]$, then $x$ is defined in $(J^\kappa)^o$ and $\alpha(t)\le x(t)\le \beta(t)$ for all $ t \in (J^\kappa)^o$.
 From the conditions on $\alpha$ and $\beta$ on the boundary it follows that
 $$x(a)=x(\sigma^2(b))=0,$$
so that $x$ is a solution of the problem $(P_0)$.

 Suppose there exists a function $h(t)=(h_1(t),\dots,h_n(t))$  with $h_i \in L^1((J^\kappa)^o))$ such that $$ |f_i(t,x)|\leq h_i(t),\hspace{0.25cm}\forall \ t \in (J^\kappa)^o) \mbox{ and } \alpha_i(t) \le x_i \le \beta_i(t), i=1,\dots,n,$$
then, we can assume that $|x_i^{\Delta \Delta}(t)|\le h_i(t)$, $i=1,\dots,n$, which implies that $x_i^{\Delta \Delta}$ is absolutely integrable on $[a,\sigma(b)]_{\mathrm{\T}}$ and $x_i^{\Delta}\in C[a,\sigma(b)]$, $i=1,\dots,n$, so $x$ is a positive solution type $1$.
\qed   \end{proof}

\begin{teor}
\label{cond necesaria y suficiente}
Suppose that $(\widetilde{H}_1)$, $(\widetilde{H}_2)$ and $(H_{3})$ or $(\overline{{H}_3})$ hold. There exists a positive solution type $1$ if and only if the following conditions hold
$$0 < \int_a^{\sigma(b)}  f_i(s,E^{\sigma}(s)) \, \Delta s < \infty,$$
for all $i=1,\dots,n$, where $E(t)=(e(t),\dots,e(t))$.
\end{teor}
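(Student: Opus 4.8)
The plan is to prove the two implications separately, in each case using the scaling hypothesis $(\widetilde{H}_2)$ to compare $f_i$ evaluated along the unknown solution with $f_i$ evaluated along the fixed profile $E$.

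\textbf{Necessity.} Suppose first that $x$ is a positive solution of type $1$. I would start from Lemma \ref{le_I1_I2}, which gives constants $0<I_{j1}<I_{j2}$ with $I_{j1}e(t)\le x_j(t)\le I_{j2}e(t)$; hence $x_j^\sigma(s)=c_j(s)\,e^\sigma(s)$ with $c_j(s)\in[I_{j1},I_{j2}]$. Scaling one coordinate at a time and applying $(\widetilde{H}_2)$ (and its Remark, according as the factor $c_j(s)$ is $\ge 1$ or $\le 1$), each factor $c_j(s)^{\lambda_{ij}}$ or $c_j(s)^{\mu_{ij}}$ remains trapped between fixed positive constants, so that $K_{i1}\,f_i(s,E^\sigma(s))\le f_i(s,x^\sigma(s))\le K_{i2}\,f_i(s,E^\sigma(s))$ for some $0<K_{i1}<K_{i2}$, uniformly in $s$. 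Integrating over $(J^\kappa)^o$ and using that for a type-$1$ solution $\int_a^{\sigma(b)}f_i(s,x^\sigma(s))\,\Delta s=\lim_{t\to a^+}x_i^\Delta(t)-\lim_{t\to\sigma(b)^-}x_i^\Delta(t)$ is finite, I obtain finiteness of $\int_a^{\sigma(b)}f_i(s,E^\sigma(s))\,\Delta s$. Strict positivity follows because $f_i\ge 0$ cannot vanish identically: otherwise $x_i$ would be $\Delta$-harmonic with zero boundary data, hence $x_i\equiv 0$, contradicting $x_i>0$.

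\textbf{Sufficiency.} Assume now that $0<\int_a^{\sigma(b)}f_i(s,E^\sigma(s))\,\Delta s<\infty$ for every $i$. I would construct an ordered pair of lower/upper solutions as scalar multiples of the Green potential
$$\Phi_i(t)=\int_a^{\sigma(b)}G(t,s)\,f_i(s,E^\sigma(s))\,\Delta s.$$
Repeating verbatim the estimates of Lemma \ref{le_I1_I2} (this is exactly where both halves of the integral condition are used) yields $c_{i1}e(t)\le\Phi_i(t)\le c_{i2}e(t)$ with $0<c_{i1}<c_{i2}<\infty$, while $-\Phi_i^{\Delta\Delta}=f_i(\cdot,E^\sigma)$ with zero boundary values. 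Put $\alpha_i=\delta\Phi_i$ and $\beta_i=M\Phi_i$. Writing $\alpha_j^\sigma=(\delta\tilde c_j)\,e^\sigma$ and $\beta_j^\sigma=(M\tilde c_j)\,e^\sigma$, where $\tilde c_j(t)=\Phi_j^\sigma(t)/e^\sigma(t)\in[c_{j1},c_{j2}]$, and scaling coordinate by coordinate in $(\widetilde{H}_2)$ (for $M$ large every factor $M\tilde c_j\ge 1$, for $\delta$ small every factor $\delta\tilde c_j\le 1$), I arrive at
$$f_i(t,\beta^\sigma)\le C_i^{+}\,M^{\sum_j\mu_{ij}}f_i(t,E^\sigma),\qquad f_i(t,\alpha^\sigma)\ge C_i^{-}\,\delta^{\sum_j\mu_{ij}}f_i(t,E^\sigma),$$
with positive constants $C_i^{\pm}$. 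The upper- and lower-solution inequalities $Mf_i(\cdot,E^\sigma)\ge f_i(\cdot,\beta^\sigma)$ and $\delta f_i(\cdot,E^\sigma)\le f_i(\cdot,\alpha^\sigma)$ then reduce to $M^{1-\sum_j\mu_{ij}}\ge C_i^{+}$ and $\delta^{1-\sum_j\mu_{ij}}\le C_i^{-}$.

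\textbf{The decisive point.} Everything hinges on the exponent $\sum_j\mu_{ij}=\mu_{ii}+\sum_{j\ne i}\mu_{ij}<1$, which holds because $\mu_{ii}<1$ and $\mu_{ij}<0$ for $j\ne i$; thus $1-\sum_j\mu_{ij}>0$ and the two reduced inequalities can be met simultaneously for all $i$ by choosing $M$ large and $\delta$ small, with $\delta\le M$ ensuring $0<\alpha\le\beta$ on $(J^\kappa)^o$. Lemma \ref{exis on alpha beta} then furnishes a solution $x$ with $\alpha\le x\le\beta$. Finally, since $\alpha^\sigma\le x^\sigma\le\beta^\sigma$ forces each $x_j^\sigma$ to be a bounded positive multiple of $e^\sigma$, the same coordinate-wise use of $(\widetilde{H}_2)$ gives $0\le f_i(t,x^\sigma)\le h_i(t)$ with $h_i$ a fixed constant times $f_i(\cdot,E^\sigma)\in L^1((J^\kappa)^o)$; the second part of Lemma \ref{exis on alpha beta} then upgrades $x$ to a positive solution of type $1$. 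I expect the main obstacle to be precisely the coordinate-wise bookkeeping in $(\widetilde{H}_2)$: keeping every intermediate argument inside $\mathcal{A}$ and correctly selecting the exponent ($\lambda_{ij}$ or $\mu_{ij}$) according to whether each scaling factor exceeds or falls below $1$, so that the powers of $M$ and $\delta$ combine into the single controlling exponent $\sum_j\mu_{ij}$.
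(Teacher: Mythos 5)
Your proposal is correct and follows essentially the same route as the paper: necessity via Lemma \ref{le_I1_I2} together with integration of the equation (the two-sided bound also covering strict positivity), and sufficiency by taking the lower and upper solutions to be scalar multiples of the Green potential $\int_a^{\sigma(b)}G(t,s)f_i(s,E^\sigma(s))\,\Delta s$, closing the resulting self-referential inequality through $(\widetilde{H}_2)$, and then invoking Lemma \ref{exis on alpha beta} with $h_i$ a constant multiple of $f_i(\cdot,E^\sigma)$. The only divergence is in the bookkeeping: the paper splits each scaling factor into a $\le 1$ part and a $\ge 1$ part (using both $\mu_{ij}$ and $\lambda_{ij}$, with closing exponent $1-\mu_{ii}>0$), whereas you scale each coordinate once with a uniformly small $\delta$ or large $M$ and close with the exponent $1-\sum_j\mu_{ij}>0$; both choices are legitimate under the sign conditions of $(\widetilde{H}_2)$.
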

\begin{proof}

\underline{Necessity}

Suppose that there exists $x=(x_1,\dots,x_n)$ positive solutions type $1$ of $(P_0)$. By Lemma $\ref{le_I1_I2}$ there are constants $I_{i1}, I_{i2}$, $0<I_{i1}< I_{i2}$ for each $i=1,\dots,n$ such that
$$I_{i1}e(t)\leq x_i(t) \leq e(t) I_{i2}.$$
Let $K>0$, such that $KI_{i2}\le 1$, $\frac{1}{K} \ge 1$, $i=1,\dots,n$. By $(\widetilde{H}_2)$  and the above inequality, it follows that
$$
\begin{array}{lcl}
f_i\left(t,x^{\sigma}(t)\right) &\ge  & \displaystyle \Bigg(\frac{1}{K}\Bigg)^{\lambda_{ii}} f_i\left(t,x_1^{\sigma}(t),\dots,\frac{K x_i^{\sigma}(t)}{e^{\sigma}(t)} e^{\sigma}(t), \dots,x_n^{\sigma}(t)\right)\\
\\
&\ge&  \displaystyle K^{\mu_{ii}-\lambda_{ii}}\Bigg(\frac{x_i^{\sigma}(t)}{e^{\sigma}(t)}\Bigg)^{\mu_{ii}} f_i\left(t,x_1^{\sigma}(t),\dots,e^{\sigma}(t), \dots,x_n^{\sigma}(t)\right))
\\
&\ge& K^{\mu_{ij}-\lambda_{ii}}I_{i1}^{\mu_{ii}} f_i(t,x_1^{\sigma}(t),\dots,e^{\sigma}(t), \dots,x_n^{\sigma}(t))\\
\\
& \ge & \displaystyle K^{ \sum_{j=1}^n (\mu_{ij}-\lambda_{ij})}I_{i1}^{\mu_{ii}} \prod_{j=1,j\not=i}^n I_{j2}^{\mu_{ij}} f_i(t,E^{\sigma}(t))\\
\end{array}
$$
hence
$$
\begin{array}{rl}
 \displaystyle  \int_a^{\sigma(b)}  f_i(s,E^{\sigma}(s)) \, \Delta s & \le \\
 \le  & \displaystyle K^{  \sum_{j=1}^n (\lambda_{ij}-\mu_{ij})}I_{i1}^{-\mu_{ii}} \prod_{j=1,j\not=i}^n I_{j2}^{-\mu_{ij}} \int_a^{\sigma(b)} f_i(s,x^{\sigma}(s))\, \Delta s\\
\\
=& \displaystyle K^{  \sum_{j=1}^n (\lambda_{ij}-\mu_{ij})}I_{i1}^{-\mu_{ii}} \prod_{j=1,j\not=i}^n I_{j2}^{-\mu_{ij}} (- x_i^{\Delta}(\sigma(b))+ x_i^{\Delta}(a)) <\infty.\\
\end{array}
$$

\underline{Sufficiency}.

 Suppose that there exists a constant $C\ge1$ such that $CI_{i1}\ge 1$ and $I_{i2}\le C$.

We consider
$$\alpha_i(t)= k_{i1} y_i(t),$$
$$\beta_i(t)= k_{i2} y_i(t),$$
with
$$y_i(t)= \int_a^{\sigma(b)}  G(t,s)f_i(s,E^{\sigma}(s)) \, \Delta s,$$
where $G(t,s)$  is Green's function $\ref{green}$ and $k_{i1}$ and $k_{i2}$ be determined below.

Note that $y_i$ satisfies
\begin{itemize}
            \item $y_i, \ y_i^{\Delta} \in C([a,\sigma^2(b)]_\T)$
            \item $y_i^{\Delta \Delta} \in C_{rd}((a,\sigma^2(b))_\T)$
\end{itemize}

We have,
$$e(t) I_{i1}\le y_i(t) \le e(t) I_{i2},$$
where
$$ I_{i1}= \frac{1}{\sigma^2(b)-a}  \int_a^{\sigma(b)} \frac{(s-a)\ (\sigma^2(b)-\sigma(s))}{\sigma^2(b)-a}\, f_i(s,e^{\sigma}(s) )\, \Delta s ,$$
and
$$ I_{i2}=   \int_a^{\sigma(b)}  f_i(s,e^{\sigma}(s)) \, \Delta s ,$$
Consider now 
$$k_{i1}= \displaystyle \min\Big\{1,\Big( C^{\sum_{j=1}^n (\lambda_{ij}-\mu_{ij})} \prod_{j=1}^n I_{j2}^{\lambda_{ij}}\Big)^{\frac{1}{1-\mu_{ii}}}\Big\} ,$$
$$k_{i2}= \displaystyle \max\Big\{1,\Big( C^{\sum_{j=1}^n (\mu_{ij}-\lambda_{ij})} \prod_{j=1}^n I_{j1}^{\lambda_{ij}}\Big)^{\frac{1}{1-\mu_{ii}}}\Big\} .$$
Given that $e^{\sigma}(t) I_{i1}\le y_i^{\sigma}(t)$ and $CI_{i1}\ge 1$, then $e^{\sigma}(t)\le C y_i^{\sigma}(t)$.
Since
$$f_i(t,\alpha_1^{\sigma}(t), \dots, \alpha_n^{\sigma}(t))= f_i\Big(t,\frac{k_{11}}{C}\frac{C y_1^\sigma(t)}{e^{\sigma}(t)} e^{\sigma}(t), \dots, \frac{k_{n1}}{C}\frac{C y_n^\sigma(t)}{e^{\sigma}(t)} e^{\sigma}(t)\Big),$$
we have that
$$f_i(t,\alpha^{\sigma}(t))\ge  \prod_{j=1}^{n}\Bigg(\frac{k_{j1}}{C}\Bigg)^{\mu_{ij}} \Bigg(\frac{C y_j^\sigma(t)}{e^{\sigma}(t)}\Bigg)^{\lambda_{ij}} f_i(t, E^{\sigma}(t))$$

$$\ge \prod_{j=1}^{n} k_{j1}^{\mu_{ij}} C^{\lambda_{ij}-\mu{ij}} I_{j2}^{\lambda_{ij}} f_i(t, E^{\sigma}(t)),$$
and
$$\prod_{j=1}^{n} k_{j1}^{\mu_{ij}} C^{\lambda_{ij}-\mu{ij}} I_{j2}^{\lambda_{ij}} \ge k_{i1}^{1- \mu_{ii}} \prod_{j=1}^{n} k_{j1}^{\mu_{ij}}\ge k_{i1} \prod_{j=1, i\not=j}^{n} k_{j1}^{\mu_{ij}} \ge k_{i1},$$
which implies that
$$f_i(t,\alpha^{\sigma}(t))\ge   k_{i1} f_i(t, E^{\sigma}(t))= - \alpha_i^{\Delta\Delta}(t).$$

In a similar way
$$f_i(t,\beta^{\sigma}(t))\le  \prod_{j=1}^{n}(k_{j2}C)^{\mu_{ij}} \Bigg(\frac{ y_j^\sigma(t)}{C e^{\sigma}(t)}\Bigg)^{\lambda_{ij}} f_i(t, E^{\sigma}(t))$$

$$\le \prod_{j=1}^{n} k_{j2}^{\mu_{ij}} C^{\mu_{ij}-\lambda{ij}} I_{j1}^{\lambda_{ij}} f_i(t, E^{\sigma}(t)) \le   k_{i2} f_i(t, E^{\sigma}(t))= - \beta_i^{\Delta\Delta}(t). $$

Thus there is a lower solution $\alpha$ and an upper solution $\beta$ of the problem $(P_0)$ that satisfy $0<\alpha_i(t)\le\beta_i(t)$ for $t \in (J^\kappa)^o$, $i=1,\dots,n$, $\alpha_i(a)=\beta_i(a)=\alpha_i(\sigma^2(b))=\beta_i(\sigma^2(b))=0$. Applying the Lemma $\ref{exis on alpha beta}$, problem $(P_0)$ has a solution $x$ such that $\alpha\le x \le \beta$. Note that, for $t \in (J^\kappa)^o$ and $\alpha\le x \le \beta$,
$$0\le f_i(t,x_i^{\sigma}(t))=   f_i\Bigg(t,\Bigg(\frac{k_{11}}{C}\Bigg) \Bigg(\frac{Cx_1^{\sigma}(t)}{k_{11}e^{\sigma}(t)}\Bigg)e^{\sigma}(t), \dots, \Bigg(\frac{k_{n1}}{C}\Bigg) \Bigg(\frac{Cx_n^{\sigma}(t)}{k_{n1}e^{\sigma}(t)}\Bigg) e^{\sigma}(t)\Bigg) $$ $$\le \Bigg(\frac{k_{11}}{C}\Bigg)^{\lambda_{i1}} \Bigg(\frac{Cx_1^{\sigma}(t)}{k_{11}e^{\sigma}(t)}\Bigg)^{\mu_{i1}} \cdots \Bigg(\frac{k_{n1}}{C}\Bigg)^{\lambda_{in}} \Bigg(\frac{Cx_n^{\sigma}(t)}{k_{n1}e^{\sigma}(t)}\Bigg)^{\mu_{in}} f_i(t,E^{\sigma}(t))\le h_i(t),$$
with $h_i(t)= K_i f_i(t,E^{\sigma}(t))$ and
$$K_i=\prod_{j=1}^{n}\Bigg(\frac{k_{j1}}{C}\Bigg)^{\lambda_{ij}-\mu_{ij}}\prod_{j=1}^{n}\max_{k=1,\dots,n}\{(I_{k1})^{\mu_{ij}},(I_{k2})^{\mu_{ij}}\}.$$

Due to the hypothesis, we can then ensure that
$$\int_a^{\sigma(b)}h_i(s) \; \Delta s < \infty,$$
for $i=1,\dots,n$, which implies the existence of a positive solution type $1$ of the problem $(P_0)$ such that $0<\alpha\le x \le \beta$.

\qed   \end{proof}

\begin{teor}
\label{cond necesaria}
 If there exists a positive solution of the problem and $\sigma^2(b)>0$, then the following conditions hold
$$0 < \int_a^{\sigma(b)}  (\sigma(s)-a)(\sigma(b)-\sigma(s))f_i(s,[\sigma^2(b)]) \, \Delta s < \infty,$$
for all $i=1,\dots,n$, with $[\sigma^2(b)]=(\sigma^2(b),\dots,\sigma^2(b))$.
\end{teor}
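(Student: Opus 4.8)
The plan is to read off the stated integrability directly from the multiplicative comparison $(\widetilde{H}_2)$, feeding it the pointwise bounds available for a positive solution together with the integrability already secured for the ``profile'' $E$. Throughout write $m=\sigma^2(b)$ and recall $e^\sigma(s)=\frac{(\sigma(s)-a)(m-\sigma(s))}{m-a}$, so that $0<e^\sigma(s)\le m$ on $(J^\kappa)^o$ and the weight obeys $(\sigma(s)-a)(\sigma(b)-\sigma(s))\le(\sigma(s)-a)(m-\sigma(s))=(m-a)\,e^\sigma(s)$; this reduces matters to estimating $\int_a^{\sigma(b)}e^\sigma(s)\,f_i(s,[m])\,\Delta s$.

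For finiteness I would first invoke Lemma \ref{le_I1_I2}, which gives $I_{i1}e(t)\le x_i(t)\le I_{i2}e(t)$, and the necessity half of Theorem \ref{cond necesaria y suficiente}, which yields $\int_a^{\sigma(b)}f_i(s,E^\sigma(s))\,\Delta s<\infty$ for each $i$. Then I compare the constant state $[m]=(m,\dots,m)$ with $E^\sigma(s)=(e^\sigma(s),\dots,e^\sigma(s))$: since $c:=m/e^\sigma(s)\ge1$, applying the Remark following $(\widetilde{H}_2)$ one coordinate at a time (each step multiplying by a factor $c^{\mu_{ij}}$) gives $f_i(s,[m])\le c^{\sum_{j=1}^n\mu_{ij}}f_i(s,E^\sigma(s))$. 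Hence $(m-a)\,e^\sigma(s)\,f_i(s,[m])\le (m-a)\,m^{\sum_j\mu_{ij}}\,(e^\sigma(s))^{1-\sum_j\mu_{ij}}\,f_i(s,E^\sigma(s))$, and because $\mu_{ii}<1$ and $\mu_{ij}<0$ for $j\ne i$ we have $\sum_{j}\mu_{ij}<1$, so the exponent $1-\sum_j\mu_{ij}$ is positive and the factor $(e^\sigma(s))^{1-\sum_j\mu_{ij}}$ stays bounded on $(J^\kappa)^o$. Integrating and using $\int f_i(s,E^\sigma)\,\Delta s<\infty$ then delivers $\int_a^{\sigma(b)}(\sigma(s)-a)(\sigma(b)-\sigma(s))\,f_i(s,[m])\,\Delta s<\infty$.

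For strict positivity I would argue by rigidity. If the integral vanished for some $i$ then, since $f_i\ge0$, we would have $f_i(s,[m])=0$ for almost every $s$; scaling each coordinate of $[m]$ down to $x^\sigma(s)$ through $(\widetilde{H}_2)$ (its two defining inequalities trap $f_i(s,\dots,cx_j,\dots)$ between $c^{\mu_{ij}}f_i$ and $c^{\lambda_{ij}}f_i$, both zero once $f_i$ is) would force $f_i(s,x^\sigma(s))\equiv0$. Then $x_i^{\Delta\Delta}\equiv0$, so $x_i$ is affine, and the boundary conditions $x_i(a)=x_i(\sigma^2(b))=0$ give $x_i\equiv0$, contradicting positivity. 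Hence the integral is strictly positive.

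The main obstacle I anticipate is the endpoint analysis: making precise that the scaling factor, which blows up like $c=m/e^\sigma(s)\to\infty$ as $s$ approaches $a$ or $\sigma(b)$, is absorbed by the vanishing of the weight. Everything hinges on the bookkeeping of the exponents, namely that the accumulated power $\sum_{j}\mu_{ij}$ remains below $1$ so that $(e^\sigma)^{1-\sum_j\mu_{ij}}$ stays bounded, and on the routine but sign-sensitive chaining of the multiplicative inequalities coordinate by coordinate, keeping each intermediate state inside $\mathcal A$. A secondary point to verify is the harmless replacement of $(\sigma(b)-\sigma(s))$ by $(m-\sigma(s))$ in the weight and the legitimacy of invoking Lemma \ref{le_I1_I2}, which presupposes the solution is of type $1$.
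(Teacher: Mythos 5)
Your finiteness argument rests on Lemma \ref{le_I1_I2} and on the necessity half of Theorem \ref{cond necesaria y suficiente}, both of which require the solution to be a positive solution \emph{of type $1$}, that is, to have finite one-sided limits of $x_i^{\Delta}$ at $a$ and at $\sigma(b)$. Theorem \ref{cond necesaria} assumes only a positive solution, and the point you flag as ``secondary'' is in fact the whole content of the statement: if the solution were of type $1$, Theorem \ref{cond necesaria y suficiente} would already give $\int_a^{\sigma(b)}f_i(s,E^{\sigma}(s))\,\Delta s<\infty$ and the weighted conclusion would follow essentially as a corollary, exactly as your scaling computation shows. The weight $(\sigma(s)-a)(\sigma(b)-\sigma(s))$ is inserted precisely so that the conclusion survives when $x_i^{\Delta}$ may blow up at the endpoints, in which case $\int_a^{\sigma(b)}f_i(s,x^{\sigma}(s))\,\Delta s$ need not be finite and neither Lemma \ref{le_I1_I2} nor the necessity half of Theorem \ref{cond necesaria y suficiente} is available. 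As written, your proof therefore establishes a weaker theorem than the one stated.

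The paper's proof is built to avoid this. It uses $(\widetilde{H}_2)$ only to obtain a pointwise bound $f_i(t,[\sigma^2(b)])\le M\,f_i(t,x^{\sigma}(t))=-M\,x_i^{\Delta\Delta}(t)$ with $M$ a finite constant, then fixes an \emph{interior} point $t_i$ and evaluates the iterated integrals $\int_a^{t_i}\int_t^{t_i}$ and $\int_{t_i}^{\sigma(b)}\int_{t_i}^{t}$ of $-x_i^{\Delta\Delta}$; these are finite using only $x_i^{\Delta}(t_i)$ and the boundedness of $x_i$, with no reference to derivative limits at $a$ or $\sigma(b)$. Integration by parts converts the two iterated integrals into $\int_a^{t_i}(\sigma(t)-a)f_i(t,[\sigma^2(b)])\,\Delta t$ and $\int_{t_i}^{\sigma(b)}(\sigma(b)-\sigma(t))f_i(t,[\sigma^2(b)])\,\Delta t$, which is exactly where the weights come from. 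To repair your argument you would need to replace the appeal to Lemma \ref{le_I1_I2} and Theorem \ref{cond necesaria y suficiente} by a device of this kind that never touches $x_i^{\Delta}$ at the endpoints. Your rigidity argument for the strict positivity of the integral is sound, and is in fact more explicit than anything the paper offers on that point.
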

\begin{proof}
Fixed $i=1,\dots,n$, let's consider $C_0>0$, $C_0^i$ two constants such that $\displaystyle \frac{C_0 x_j^{\sigma}(t)}{\sigma^2(b)}\le 1$, if $j=1,\dots,n$ and $j\not=i$, $\displaystyle \frac{1}{C_0}\ge 1$, $\displaystyle \frac{C_0^i x_i^{\sigma}(t)}{\sigma^2(b)}\ge 1$ and $C_0^i\ge1$.

We have,
$$f_i(t,x^{\sigma}(t))\ge \left(\prod_{j=1,j\not=i}^n \left(\frac{1}{C_0}\right)^{\lambda{ij}}\left(\frac{C_0 x_j^{\sigma}(t)}{\sigma^2(b)}\right)^{\mu{ij}}\right) f_i(t,\sigma^2(b),\dots,x_i^{\sigma}(t),\dots,\sigma^2(b))$$
$$\ge \left(\prod_{j=1,j\not=i}^n \left(\frac{1}{C_0}\right)^{\lambda{ij}}\left(\frac{C_0 x_j^{\sigma}(t)}{\sigma^2(b)}\right)^{\mu{ij}} \right) \left(\frac{1}{C_0^i}\right)^{\mu{ii}}\left(\frac{C_0^i x_i^{\sigma}(t)}{\sigma^2(b)}\right)^{\lambda{ii}} f_i(t,[\sigma^2(b)]).$$

Hence,
$$f_i(t,[\sigma^2(b)]) \le M f_i(t,x^{\sigma}(t)),$$
where
$$M=\sup_{t \in [a,\sigma(b)]_\T} \left(\prod_{j=1,j\not=i}^n C_0^{\lambda{ij}-\mu{ij}}\left(\frac{x_j^{\sigma}(t)}{\sigma^2(b)}\right)^{-\mu{ij}} \right) \left(C_0^i\right)^{\mu{ii}}\left(\frac{C_0^i x_i^{\sigma}(t)}{\sigma^2(b)}\right)^{-\lambda{ii}}.$$

Let $t_i\in (a,\sigma(b))_\T$,
$$\int_a^{t_i}\int_t^{t_i}f_i(s,[\sigma^2(b)]) \Delta s \Delta t\le -M \int_a^{t_i} \int_t^{t_i} x_i^{\Delta\Delta}(s) \Delta s\Delta t= $$ $$-M \int_a^{t_i} \left(x_i^{\Delta}(t_i)-x_i^{\Delta}(t)\right) \Delta t= -M \left(x_i^{\Delta}(t_i)(t_i-a)-x_i(t_i)+x_i(a)\right)<\infty.$$
	
	Integrating by parts, we have
$$\int_a^{t_i}\int_t^{t_i}f_i(s,[\sigma^2(b)]) \Delta s \Delta t= \int_a^{t_i} (\sigma(t)-a) f_i(t,x_{\sigma^2(b)}) \Delta t<\infty.$$

In a similar way
$$\int_{t_i}^{\sigma(b)}\int_{t_i}^t f_i(s,[\sigma^2(b)]) \Delta s \Delta t\le -M \int_{t_i}^{\sigma(b)} \int_{t_i}^t x_i^{\Delta\Delta}(s) \Delta s\Delta t= $$ $$-M \int_{t_i}^{\sigma(b)} \left(x_i^{\Delta}(t)-x_i^{\Delta}(t_i)\right) \Delta t= -M \left(x_i(\sigma(b))-x_i(t_i)- x_i^{\Delta}(t_i)(\sigma(b)-t_i)\right)<\infty,$$
and integrating by parts
$$\int_{t_i}^{\sigma(b)}\int_{t_i}^t f_i(s,[\sigma^2(b)]) \Delta s \Delta t= \int_{t_i}^{\sigma(b)} (\sigma(b)-\sigma(t)) f_i(t,[\sigma^2(b)]) \Delta t<\infty.$$

Then, we concluded that
$$0<\int_{a}^{\sigma(b)} (\sigma(t)-a)(\sigma(b)-\sigma(t)) f_i(t,[\sigma^2(b)])  \Delta t <\infty.$$
\qed   \end{proof}

\begin{teor}
\label{cond lower}
 If the following conditions hold
$$0 < \int_a^{\sigma(b)}  (\sigma(s)-a)(\sigma(b)-\sigma(s))f_i(s,[\sigma^2(b)]) \, \Delta s < \infty,$$
for all $i=1,\dots,n$ and $\sigma^2(b)>0$, then there exists a lower solution to problem $(P)$.
\end{teor}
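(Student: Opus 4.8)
The natural strategy is to imitate the sufficiency construction of Theorem \ref{cond necesaria y suficiente}, producing the lower solution as a small multiple of a Green's function potential, but taking as reference profile the function $e$ rather than the constant vector $[\sigma^2(b)]$. Explicitly, for each $i=1,\dots,n$ I would set
$$y_i(t)=\int_a^{\sigma(b)}G(t,s)\,f_i(s,E^\sigma(s))\,\Delta s,\qquad \alpha_i(t)=k_i\,y_i(t),$$
where $G$ is the Green's function (\ref{green}), $E^\sigma(t)=(e^\sigma(t),\dots,e^\sigma(t))$, and the constants $k_i>0$ are fixed (small) at the end. Referencing $E^\sigma$ is essential: it gives $-\alpha_i^{\Delta\Delta}(t)=k_i\,f_i(t,E^\sigma(t))$, a right-hand side that decays near $a$ and $\sigma^2(b)$ at the same rate as the argument $\alpha^\sigma(t)$, whereas the constant profile $[\sigma^2(b)]$ would leave an uncontrolled factor $(e^\sigma(t))^{\mu_{ii}}\to 0$ at the endpoints and prevent the scaling inequality from closing.

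First I would verify that $y_i$ is well defined. Since $e^\sigma(s)\le\sigma^2(b)$ and $f_i$ is nondecreasing in each variable by $(H_3)$, one has $f_i(s,E^\sigma(s))\le f_i(s,[\sigma^2(b)])$; together with the elementary estimate $G(t,s)\le(\sigma^2(b)-a)^{-1}(\sigma(s)-a)(\sigma^2(b)-\sigma(s))$ and the hypothesis, this bounds $y_i$ by a finite multiple of the weighted integral and shows $y_i$ is finite and bounded in $t$ (after reconciling the weights $\sigma(b)-\sigma(s)$ and $\sigma^2(b)-\sigma(s)$ near the right endpoint). The usual properties of $G$ then give $y_i\in C^2_{rd}$, $y_i(a)=y_i(\sigma^2(b))=0$ and $-y_i^{\Delta\Delta}=f_i(\cdot,E^\sigma(\cdot))$, and repeating the computation of Lemma \ref{le_I1_I2} with $f_i(\cdot,E^\sigma(\cdot))$ yields two-sided bounds $I_{i1}e(t)\le y_i(t)\le I_{i2}e(t)$; the strict inequality $I_{i1}>0$ follows from $0<\int_a^{\sigma(b)}(\sigma(s)-a)(\sigma(b)-\sigma(s))f_i(s,[\sigma^2(b)])\,\Delta s$ and $(\widetilde{H}_2)$, which keep $f_i(\cdot,E^\sigma(\cdot))$ positive on a set of positive measure. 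Thus each $\alpha_i$ is continuous, vanishes at $a$ and $\sigma^2(b)$ and is strictly positive on the interior, so $\alpha$ meets every requirement of a lower solution except the differential inequality.

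To establish $-\alpha_i^{\Delta\Delta}(t)=k_i\,f_i(t,E^\sigma(t))\le f_i(t,\alpha^\sigma(t))$ I would reproduce the chain of scalings from the sufficiency part of Theorem \ref{cond necesaria y suficiente}. Writing $\alpha_j^\sigma(t)=(k_j/C)\cdot(C\,y_j^\sigma(t)/e^\sigma(t))\cdot e^\sigma(t)$ with $C\ge1$ chosen so that $C\,I_{j1}\ge1$ and $k_j/C\le1$, one applies $(\widetilde{H}_2)$ with the factor $k_j/C\le1$ and then the companion inequality for $c\ge1$ (noted just after $(\widetilde{H}_2)$) with the factor $C\,y_j^\sigma/e^\sigma\in[C\,I_{j1},C\,I_{j2}]$, obtaining a lower bound for $f_i(t,\alpha^\sigma(t))$ of the form $\big(\prod_j k_j^{\mu_{ij}}\,C^{\lambda_{ij}-\mu_{ij}}\,I_{j2}^{\lambda_{ij}}\big)f_i(t,E^\sigma(t))$, in which every $t$-dependent factor has cancelled against $e^\sigma$. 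Choosing the $k_i$ as small as in the formula for $k_{i1}$ there (so that $k_i^{1-\mu_{ii}}\le\prod_jC^{\lambda_{ij}-\mu_{ij}}I_{j2}^{\lambda_{ij}}$) makes this constant at least $k_i$, giving $f_i(t,\alpha^\sigma(t))\ge k_i\,f_i(t,E^\sigma(t))=-\alpha_i^{\Delta\Delta}(t)$, as required.

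The step I expect to be the real obstacle is the boundedness used implicitly above, namely controlling the ratios $y_j^\sigma/e^\sigma$ from above uniformly up to the endpoints (equivalently, finiteness of the constants $I_{j2}$). Under the type~$1$ condition $\int_a^{\sigma(b)}f_i(s,E^\sigma(s))\,\Delta s<\infty$ of Theorem \ref{cond necesaria y suficiente} this is automatic, but the present hypothesis only controls the weighted integral $\int_a^{\sigma(b)}(\sigma(s)-a)(\sigma(b)-\sigma(s))f_i(s,[\sigma^2(b)])\,\Delta s$, which is strictly weaker and in general does not make $f_i(\cdot,E^\sigma(\cdot))$ integrable against $e^\sigma$. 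The crux is therefore to show that this weaker, weighted integrability---whose weight is precisely matched to the boundary decay of both $e$ and $G$---still forces the singularity of $f_i$ at $a$ and $\sigma^2(b)$ to be integrable against $G$, so that the cancellation in the previous paragraph is legitimate; this is exactly where the argument must depart from the cleaner Theorem \ref{cond necesaria y suficiente}.
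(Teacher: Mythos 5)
Your construction does not close under the stated hypothesis, and you have in fact located the failure yourself in your final paragraph: the potential $y_i(t)=\int_a^{\sigma(b)}G(t,s)f_i(s,E^\sigma(s))\,\Delta s$ and the ensuing scaling argument require the constants $I_{j2}=\int_a^{\sigma(b)}f_j(s,E^{\sigma}(s))\,\Delta s$ to be finite, whereas the present theorem only assumes finiteness of the weighted integral $\int_a^{\sigma(b)}(\sigma(s)-a)(\sigma(b)-\sigma(s))f_i(s,[\sigma^2(b)])\,\Delta s$. This is genuinely weaker: for $f_i(t,x)=x_i^{\lambda}$ with $\lambda\le-1$ (admissible under $(\widetilde{H}_2)$ with $\lambda_{ii}=\lambda$) the hypothesis holds trivially while $f_i(\cdot,E^\sigma(\cdot))=(e^\sigma)^{\lambda}$ is not integrable, so $I_{i2}=\infty$ and the bound $y_j^\sigma/e^\sigma\le I_{j2}$ on which your cancellation rests is vacuous; for $\lambda\le-2$ your $y_i$ is not even finite. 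As written, your argument reproves (essentially) the sufficiency half of Theorem \ref{cond necesaria y suficiente} rather than this statement.

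The missing idea is precisely the route you rejected in your opening paragraph. The paper keeps the constant reference profile $[\sigma^2(b)]$ and instead inserts the decaying factor into the potential, taking
$$g_i(t)=\int_a^{\sigma(b)}G(t,s)\Bigl(\frac{(\sigma(s)-a)(\sigma^2(b)-\sigma(s))}{(\sigma^2(b)-a)^2}\Bigr)^{\mu_{ii}}f_i(s,[\sigma^2(b)])\,\Delta s,\qquad \alpha_i=k_{i1}g_i$$
(the paper's displayed definition of $g_i$ omits the exponent $\mu_{ii}$, but every subsequent computation uses it). Since the inserted weight is at most $1$ and $G(t,s)\le(\sigma(s)-a)(\sigma^2(b)-\sigma(s))/(\sigma^2(b)-a)$, the integral converges under the weighted hypothesis alone. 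The factor $(e^\sigma(t))^{\mu_{ii}}$ that you called uncontrolled is in fact matched on both sides of the differential inequality: it sits in $-\alpha_i^{\Delta\Delta}(t)=k_{i1}\bigl(\cdot\bigr)^{\mu_{ii}}f_i(t,[\sigma^2(b)])$ by construction, and it reappears in the lower bound for $f_i(t,\alpha^\sigma(t))$ because $(\widetilde{H}_2)$ produces the diagonal factor $(\alpha_i^\sigma(t))^{\mu_{ii}}$, which the Green's-function estimate $g_i(t)\ge L_{i1}(t-a)(\sigma^2(b)-t)/(\sigma^2(b)-a)^2$ bounds below by a constant times the same weight; the off-diagonal factors $(\alpha_j^\sigma)^{\mu_{ij}}$ with $\mu_{ij}<0$ are handled by the upper bound $\alpha_j\le L_{j1}$ and contribute only constants. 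The two occurrences of the weight cancel and a sufficiently small $k_{i1}$ closes the inequality. Your closing intuition that the hypothesis is matched to the boundary decay of $e$ and $G$ is right, but the mechanism exploiting it is the exponent $\mu_{ii}<1$ inside the potential, not integrability of $f_i(\cdot,E^\sigma(\cdot))$.
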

\begin{proof}

Consider the function

$$g_i(t)= \int_{a}^{\sigma(b)} G(t,s)\frac{(\sigma(s)-a)(\sigma^2(b)-\sigma(s))}{(\sigma^2(b)-a)^2} f_i(s,[\sigma^2(b)])  \Delta s.$$
Let's see that
$$0\le g_i(t)<\infty,$$

\begin{displaymath}\begin{array}{lll}
& \displaystyle \int_{a}^{\sigma(b)} G(t,s)\left(\frac{(\sigma(s)-a)(\sigma^2(b)-\sigma(s))}{(\sigma^2(b)-a)^2}\right)^{\mu_{ii}} f_i(s,[\sigma^2(b)])  \Delta s\\
\noalign{\bigskip} = &\displaystyle \int_{a}^{t} \frac{(\sigma(s)-a)(\sigma^2(b)-t)}{\sigma^2(b)-a}\left(\frac{(\sigma(s)-a)(\sigma^2(b)-\sigma(s))}{(\sigma^2(b)-a)^2}\right)^{\mu_{ii}} f_i(s,[\sigma^2(b)])  \Delta s\\
\noalign{\bigskip} +& \displaystyle \int_{t}^{\sigma(b)} \frac{(t-a)(\sigma^2(b)-\sigma(s))}{\sigma^2(b)-a}\left(\frac{(\sigma(s)-a)(\sigma^2(b)-\sigma(s))}{(\sigma^2(b)-a)^2}\right)^{\mu_{ii}} f_i(s,[\sigma^2(b)])  \Delta s\\
\noalign{\bigskip} \le &  \displaystyle \int_{a}^{\sigma(b)} \frac{(\sigma(s)-a)(\sigma^2(b)-\sigma(s))}{\sigma^2(b)-a}\left(\frac{(\sigma(s)-a)(\sigma^2(b)-\sigma(s))}{(\sigma^2(b)-a)^2}\right)^{\mu_{ii}} f_i(s,[\sigma^2(b)])  \Delta \\
\noalign{\bigskip} \le& \displaystyle \frac{1}{\sigma^2(b)-a} \int_{a}^{\sigma(b)} (\sigma(s)-a)(\sigma^2(b)-\sigma(s)) f_i(s,[\sigma^2(b)])  \Delta s <\infty.\\
\end{array}
 \end{displaymath}

Furthermore, $g_i(a)=g_i(\sigma^2(b))=0$.

If we consider
$$L_{i1}= \frac{1}{\sigma^2(b)-a} \int_{a}^{\sigma(b)} \left( \frac{(\sigma(s)-a)(\sigma^2(b)-\sigma(s))^{1+\mu_{ii}}}{(\sigma^2(b)-a)^{2\mu_{ii}}} \right) f_i(s,[\sigma^2(b)])  \Delta s .$$

Then,
$$g_i(t)\le L_{i1}.$$

On the other hand,
\begin{displaymath}\begin{array}{lll}
& \displaystyle \frac{(t-a)(\sigma^2(b)-t)}{(\sigma^2(b)-a)^2}\ L_{i1} \ \le   \\
\noalign{\bigskip}\le  & \displaystyle \int_{a}^{t} \frac{(\sigma(s)-a)(\sigma^2(b)-t)}{\sigma^2(b)-a}   \left( \frac{(\sigma(s)-a)(\sigma^2(b)-\sigma(s))}{(\sigma^2(b)-a)^{2}} \right)^{\mu_{ii}} f_i(s,[\sigma^2(b)])  \Delta s \\
\noalign{\bigskip} +& \displaystyle  \int_{t}^{\sigma(b)} \frac{(t-a)(\sigma^2(b)-\sigma(s))}{\sigma^2(b)-a}  \left( \frac{(\sigma(s)-a)(\sigma^2(b)-\sigma(s))}{(\sigma^2(b)-a)^{2}} \right)^{\mu_{ii}}  f_i(s,[\sigma^2(b)])  \Delta s  \\
\noalign{\bigskip} = & g_i(t).\\
\end{array}
 \end{displaymath}

Let $\alpha_i(t)= k_{i1}g_i(t)$, where
$$k_{i1}=\min\left\{1, \prod_{j=1}^{n}L_{j1}^{\mu_{ij}}\left(\frac{1}{\sigma^2(b)}\right)^{\lambda_{ij}}C_2^{\mu{ij}-\lambda{ij}}\right\}^{\frac{1}{1-\mu_{ii}}},$$
with $C_2$ a constant such that $\sigma^2(b)C_2L_{i1}\le 1$ and $\displaystyle \frac{1}{C_2\sigma^2(b)}\ge 1$.

Thus, if we note that $0<\mu_{ii}<1$ and $\mu_{ij}<0$ if $i\not=j$, we obtained
$$\displaystyle k_{i1}L_{i1} \left[\frac{(t-a)(\sigma^2(b)-t)}{(\sigma^2(b)-a)^2}\right]^{\mu{ii}}\le \alpha_i(t)^{\mu_{ii}},$$
and $$\alpha_j(t)^{\mu_{ij}}\ge L_{j1}^{\mu_{ij}}.$$

Hence,
\begin{displaymath}\begin{array}{ll}
 & \displaystyle f_i(t,\alpha^{\sigma}(t)) \ge \displaystyle\prod_{j=1}^{n}\left(\frac{1}{C_2\sigma^2(b)}\right)^{\lambda_{ij}}(C_2\alpha_i^{\sigma}(t))^{\mu_{ij}}f_i(t,[\sigma^2(b)])\\
\noalign{\bigskip} = &\displaystyle \prod_{j=1}^{n}\frac{1}{\sigma^2(b)^{\lambda_{ij}}}C_2^{\mu_{ij}-\lambda_{ij}}(\alpha_j^{\sigma}(t))^{\mu_{ij}}f_i(t,[\sigma^2(b)])\\
\noalign{\bigskip} \ge& \displaystyle \prod_{j=1,j\not=i}^{n}\frac{1}{\sigma^2(b)^{\lambda_{ii}+\lambda_{ij}}}
C_2^{\mu_{ij}-\lambda_{ij}}C_2^{\mu_{ii}-\lambda_{ii}}  \ L_{j1}^{\mu_{ij}}\ (\alpha_i^{\sigma}(t))^{\mu_{ii}}  f_i(t,[\sigma^2(b)])\\
\noalign{\bigskip} \ge& \displaystyle k_{i1} \left(\frac{(\sigma^2(b)-\sigma(t))(\sigma(t)-a)}{(\sigma^2(b)-a)^2}\right)^{\mu_{ii}}
f_i(t,[\sigma^2(b)])= - \alpha_i^{\Delta\Delta}(t).\\
\end{array}
 \end{displaymath}

 This implies that $\alpha$ is a lower solution of the problem $(P_0)$.

\end{proof}

\begin{teor}
Suppose that the conditions of above theorem are satisfied, and consider $\alpha$ the lower solution of the problem $(P_0)$ provided. If there exists, $\beta$, an upper solution of $(P_0)$ with $0<\alpha\le\beta$ and $(\widetilde{H}_{1})$ and $(H_{3})$ or $(\overline{H}_{3})$ hold. Then there exists $x$ a positive solution of $(P_0)$.
\end{teor}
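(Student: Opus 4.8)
The plan is to read this statement as an assembly of the results already established in this section: Theorem~\ref{cond lower} supplies the lower solution, the hypothesis supplies the upper solution, and Lemma~\ref{exis on alpha beta} converts such an ordered pair into a solution. First I would invoke Theorem~\ref{cond lower}: its hypotheses (the integral condition together with $\sigma^2(b)>0$, under the standing assumptions $(\widetilde{H}_1)$, $(\widetilde{H}_2)$) are exactly ``the conditions of the above theorem'', so it produces the explicit lower solution $\alpha_i=k_{i1}g_i$, which is strictly positive on $(J^\kappa)^o$ and satisfies $\alpha_i(a)=\alpha_i(\sigma^2(b))=0$. By hypothesis there is also an upper solution $\beta$ with $0<\alpha\le\beta$ on $(J^\kappa)^o$, and the boundary values match: $\alpha_i(a)=\beta_i(a)=\alpha_i(\sigma^2(b))=\beta_i(\sigma^2(b))=0$.

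With this ordered pair in hand the hypotheses of Lemma~\ref{exis on alpha beta} hold: we have $0<\alpha\le\beta$, the regularity assumption $(\widetilde{H}_1)$, and one of the monotonicity conditions $(H_3)$, $(\overline{H}_3)$. The first conclusion of that lemma then yields a solution $x$ of $(P_0)$ with $\alpha\le x\le\beta$ on $(J^\kappa)^o$; since $\alpha>0$ on the interior, $x$ is strictly positive there and inherits the boundary data $x(a)=x(\sigma^2(b))=0$ from the squeeze. This already produces the asserted positive solution, and is the immediate part of the argument.

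If in addition one wants $x$ to be a positive solution of type~$1$, I would supply the integrable majorant required by the second part of Lemma~\ref{exis on alpha beta}. Using $(H_3)$ to dominate $f_i(t,x^\sigma(t))\le f_i(t,\beta^\sigma(t))$ and then the homogeneity estimates of $(\widetilde{H}_2)$ to rescale the arguments toward the constant vector $[\sigma^2(b)]$, one seeks to bound $f_i(t,x^\sigma(t))$ by a constant multiple of $(\sigma(t)-a)(\sigma(b)-\sigma(t))\,f_i(t,[\sigma^2(b)])$, which is precisely the function whose integrability is guaranteed by the hypothesis inherited from Theorem~\ref{cond lower}; this mirrors the device used in the sufficiency part of Theorem~\ref{cond necesaria y suficiente}. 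The main obstacle lies exactly here: controlling $f_i(t,x^\sigma(t))$ uniformly over $\alpha\le x\le\beta$ near the endpoints $a$ and $\sigma(b)$, where the coordinates of $x^\sigma$ tend to $0$ and $f_i$ may be singular. The scaling condition $(\widetilde{H}_2)$ is what tames this behaviour, since the lower bound $x_i^\sigma\ge\alpha_i^\sigma$ controls the rescaling factors from below and converts the singularity into a controlled power of the distance-to-the-boundary weight; matching that power to the weight appearing in the integral condition is the one point that must be verified with care, the remaining estimates being routine and parallel to those already carried out in Theorems~\ref{cond necesaria y suficiente} and~\ref{cond lower}.
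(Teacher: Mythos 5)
Your argument is exactly the paper's: the paper's own proof is a one-line remark that the result is immediate from the lower solution constructed in Theorem~\ref{cond lower}, the hypothesized upper solution $\beta$ with $0<\alpha\le\beta$, and an application of Lemma~\ref{exis on alpha beta}, which is precisely your first two paragraphs. Your third paragraph on upgrading to a type~$1$ solution goes beyond what the statement asks and what the paper proves, but it does not affect the correctness of the main argument.
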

\begin{proof}
	The demonstration of this fact is immediate taking into account the construction of the lower solution $\alpha$ obtained in the previous theorem, the existence of the upper solution $\beta$ with $0<\alpha\le\beta$ and the implementation of the lemma \ref{exis on alpha beta}.

\end{proof}

\subsection{Particular cases}
Let us briefly consider the following examples
\begin{enumerate}
\item If $[a,\sigma^2(b)]_\T$ is bounded and consists of only isolated points, such as in the case $\T=h \Z$, then  the conditions on Theorems \ref{cond necesaria y suficiente}  and \ref{cond necesaria}  are fulfilled. This it follows from the fact
$$\int_a^{\sigma^2(b)}f(t) \ \Delta t= \sum_{t \in[a,\sigma^2(b))}(\sigma(t)-t)\ f(t).$$
\item Let $q>1$ fixed, the quantum time scale $\overline{q^{\Z}}$  is defined as
$$\overline{q^{\Z}}=\{q^k: k \in \Z \} \cup \{0\},$$
which appears throughout the mathematical physics literature, where the dynamical systems of interest are the q-difference equations.

Since the only non-isolated point is 0, the interesting case is the one in which the  interval containing it. We consider $a=0$ and $\sigma(b)=1$.

Taking into account the fact that 
$$\int_0^{1}f(t) \ \Delta t= \sum_{k=1}^{\infty}\frac{q-1}{q^k}\ f(q^{-k}),$$
we have
$$ \int_0^{1}  f_i(s,E^{\sigma}(s)) \, \Delta s=\sum_{k=1}^{\infty}\frac{q-1}{q^k}\ f_i \left(q^{-k},E^{\sigma}(q^{-k})\right),$$
with $E^{\sigma}(q^{-k})= \left( q^{-k+1}(1-q^{-k}), \dots, q^{-k+1}(1-q^{-k})\right)$. Hence, the convergence of this series is the necessary and sufficient condition in Theorem  \ref{cond necesaria y suficiente}.

Analogously, the condition in Theorem  \ref{cond necesaria} can be rewritten as
$$ 0 < \int_0^{1}  q^{-2k+1}(q-1)(1- q^{-k+1})f_i( q^{-k},(q,\dots,q) \, \Delta s < \infty.$$

\end{enumerate}

\vspace{0.5cm}

\noindent {\bf{ Competing interests:}} 

The authors declare that they have no competing interests.

\noindent {\bf{Authors’ contributions: }} 

All authors contributed equally in this article. They read and
approved the final manuscript.

\noindent {\bf{Acknowledgements.}} 

The authors are  grateful to
the referees for their valuable suggestions that led to the
improvement of  the original manuscript.\\
The research of V. Otero-Espinar has been partially supported by
 Ministerio de Educaci\'on y Ciencia (Spain) and FEDER,
Project MTM2010-15314.


\begin{thebibliography}{9}


\bibitem{h} Hilger, S.; \emph{Ein Maßkettenkalk¨ul mit Anwendung auf Zentrumsmannigfaltigkeiten}, (in German),
Universit¨at W¨urzburg, 1988.

\bibitem{js}  Jain, B. and Sheng, A. D., An exploration of the approximation of derivative functions via
finite differences, Rose-Hulman Undergraduate Math J. \textbf{8} (2007), 1--19.

\bibitem{abl}  Atici, F.M.,  Biles,  D.C. and  Lebedinsky, A., An application of time scales to economics, Math.
Comput. Modelling, \textbf{43} (2006), 718--726.

\bibitem{w} Wong, J. S. W., On the generalized Emden--Fowler equation, SIAM Rev., \textbf{17}, (1975), 339--360.

\bibitem{aoll} Agarwal, R. P.,  O'Regan, D.,  Lakshmikantham, V. and Leela, S., An upper and lower solution theory for singular Emden-Fowler equations, Nonlinear Anal. Real World Appl., \textbf{3}, (2002) 275--291.

\bibitem{hz}  Habets, P. and Zanolin,  F., Upper and Lower Solutions for a Generalized Emden--Fowler Equations,  J. Math. Anal. Appl. \textbf{181} (1994), 684--700.

\bibitem{z}  Zhang, Y.,  Positive Solutions of Singular Sublinear  Emden--Fowler Boundary Value Problems,  J. Math. Anal. Appl. \textbf{185} (1994), 215--222.

\bibitem{zho} Wei, Z., Positive solution of singular Dirichlet boundary value problems for second order differential equation system. J. Math. Anal. Appl. \textbf{328}  (2007),1255--1267.

\bibitem{abm} Agarwal, R. P.,  Bohner, M., Grace, S. R. and O'Regan, D.l, Discrete oscillation theory, Hindawi Publishing Corporation, New York, 2005

\bibitem{ah} Akin-Bohner, E. and Hoffacker, J., Oscillation properties of an {E}mden-{F}owler type equation on discrete time scales, J. Difference Equ. Appl., \textbf{9}, (2003) 603--612.

\bibitem{vivi} G\'omez, A. and Otero-Espinar, V.,  Existence and uniqueness of positive solution for singular BVPs on time scales, Adv. Difference Equ. 2009, Art. ID 728484, 12 pp.

\bibitem{kno} Khan, R. A,  Nieto, J. J. and Otero-Espinar, V.,  Existence and approximation of solution of three-point boundary value problems on time scale. J. Difference Equ. Appl. \textbf{14} (2008), 723--736.

\bibitem{zlg} Zhao, J., Lian, H. and Ge W., Existence of positive solutions for nonlinear m-point boundary value problems on time scales. Bound. Value Probl.  \textbf{2012:4}, (2012), 15 pp. 


\bibitem{aopv}  Agarwal R. P., Otero--Espinar V., Perera K. and Vivero D. R., Multiple Positive Solutions in the Sense of Distributions of Singular BVPs on Time Scales and a application to Emden--Fowler equations, Adv. Difference Equ. \textbf{2008} (2008), Article ID 796851, 13 pages.

\bibitem{hss} Han, Z., Sun, S. and Shi, B., Oscillation criteria for a class of second-order Emden-Fowler delay dynamic equations on time scales, J. Math. Anal. Appl., \textbf{334}, (2007) 847--858.

\bibitem{bep} Bohner, M., Erbe, L. and Peterson, A., Oscillation for nonlinear second order dynamic equations on a time scale, J. Math. Anal. Appl., \textbf{301}, (2005) 491--507.

\bibitem{bp}  Bohner, M. and A. Peterson, \emph{Dynamic Equations on Time Scales: An Introduction with Applications}, Birkhauser, Boston, 2001.
\bibitem{od} Hartman,P.; \emph{Ordinary Differential Equations}, Birkhauser Boston, 1982.





\end{thebibliography}
\end{document}